\documentclass{amsart} 
\hsize=6.5 true in
\vsize=9 true in

\baselineskip=16pt
\usepackage{amsmath, amssymb, euscript,  enumerate}

\newtheorem{thm}{Theorem}[section]
\newtheorem{cor}[thm]{Corollary} \newtheorem{lem}[thm]{Lemma}
\newtheorem{prop}[thm]{Proposition}
\newtheorem{claim}[thm]{Claim}
\newtheorem{defn}[thm]{Definition}
\theoremstyle{remark}
\newtheorem{rem}[thm]{Remark}
\numberwithin{equation}{section}

\theoremstyle{definition}


\newcommand{\R}{\mathbb R}

\newcommand{\ric}{\mathrm{Ric}}

\def\XXint#1#2#3{{\setbox0=\hbox{$#1{#2#3}{\intr}$}
     \vcenter{\hbox{$#2#3$}}\kern-.5\wd0}}


\begin{document}

\title{The classification of locally conformally flat Yamabe solitons}

\author{Panagiota Daskalopoulos$^*$}

\address{Department of Mathematics, Columbia University, New York, NY 10027,
 USA}
\email{pdaskalo@math.columbia.edu}

\author{Natasa Sesum$^{**}$}
\address{Department of Mathematics,  The State University of New Jersey, 110 Frelinghuysen Road, Piscataway, NJ 08854,
USA}
\email{natasas@math.columbia.edu}

\thanks{$*:$ Partially supported
by NSF grant 0905749}
\maketitle 

\begin{abstract}
We provide the classification of locally conformally flat gradient Yamabe solitons with positive sectional curvature. We first show that locally conformally flat gradient Yamabe solitons with positive sectional curvature have to be rotationally symmetric and then give the classification and asymptotic behavior of all radially symmetric gradient Yamabe solitons. We also show that any eternal solutions  to the Yamabe flow with positive Ricci curvature and with the scalar curvature attaining an  interior space-time maximum must be a steady Yamabe soliton.

\end{abstract}

\section{Introduction}

Our goal in this paper is to provide the classification of all complete locally conformally flat  Yamabe gradient  solitons.  

\begin{defn}\label{eqn-defn}  The metric  $g_{ij}$  is called a Yamabe
gradient soliton if there exists a smooth scalar (potential)  function $f: \R^n \to \R$ and a constant $\rho \in \mathbb{R}$, such 
that
\begin{equation}\label{eqn-ys}
(R - \rho) \, g_{ij} = \nabla_i\nabla_j f.
\end{equation}
If $\rho >0$, $\rho < 0$ or $\rho =0$, then $g$ is called a Yamabe shrinker, Yamabe expander or Yamabe steady
soliton respectively. As a matter of scaling the metric, we may assume with no loss of generality that $\rho = 1, -1, 0$, respectively.
\end{defn}

Yamabe solitons are special solutions to the Yamabe flow
\begin{equation}
\label{eqn-yf0}
\frac{\partial}{\partial t} g_{ij} = -Rg_{ij}.   
\end{equation}
This flow has been very well understood in the compact case and there is vast literature studying the compact Yamabe flow, such as \cite{Ch}, \cite{Y}, \cite{SS}, \cite{S1}, \cite{S2}, \cite{DS}. In \cite{S1} and \cite{S2} it has been showed that if $3\le n \le 5$ or if $n \ge 6$ (in the latter case Brendle imposes that the metric is either locally conformally flat  or he assumes a certain condition on the rate of vanishing of Weyl tensor at the points at which it vanishes), then starting at any initial metric, the normalized Yamabe flow has  long time existence and converges to a metric of constant scalar curvature. Therefore it is not surprising to expect 
that all compact Yamabe solitons to have constant scalar curvature. We actually prove that in Proposition \ref{prop-compact}. 

Unlike the compact Yamabe flow, which Brendle used to give another  proof of the Yamabe problem, the complete Yamabe flow is completely unsettled. In \cite{DSe} the authors showed that in the conformally flat case and
under certain conditions on the initial data, complete non-compact solutions to the Yamabe flow  develop
a finite time singularity and after re-scaling the metric converges to the Barenblatt solution (a certain type
of a shrinker, corresponding to the Type I singularity). The general case,  even when the solution is conformally  equivalent to 
$\R^n$,  is not well understood. 
  
Even though the analogue of Perelman's monotonicity formula is still lacking for the Yamabe flow, one expects  Yamabe soliton solutions to model finite time singularities of the Yamabe flow. This expectation has been justified in Corollary \ref{cor-eternal} below which says that in certain cases of Type II singularities we may expect steady Yamabe solitons to be the right singularity models.  The classification of Yamabe solitons is one of the important  steps in understanding the singularity formation in the complete Yamabe flow.

\begin{rem}\label{rem-par1}  (i) If $g_{ij}$ defines a  Yamabe shrinker   according to 
Definition \ref{eqn-defn}, then 
 the (time dependent) metric $\bar g_{ij} $  given  by
$$\bar g_{ij}(t) = (T-t)  \phi_t^*(g), \qquad  t <T$$
where   $\phi_t$ is an one parameter family of diffeomorphisms  generated by the vector field $X = \frac{\nabla f}{(T-t)}$, 
defines   an ancient  solution to the Yamabe flow \eqref{eqn-yf0} (also called a Yamabe shrinker) 
which vanishes at time $T$ and  satisfies
$$(\bar R - \frac {1}{T-t}) \,  \bar g_{ij}(t) = \nabla_i \nabla_j f.$$  

(ii) Similarly, if  $g_{ij}$ is a Yamabe expander  then 
the (time dependent) metric $\bar g_{ij}$ defined by
$$\bar g_{ij}(t) = t \,  \phi_t^*(g_{ij}), \qquad  t >0,$$
where  $\phi_t$ is an one parameter family of diffeomorphisms generated by $X = \frac{\nabla f}{t}$,
is a solution to the Yamabe flow \eqref{eqn-yf0} (also called a Yamabe expander)
which is defined on $0 < t < \infty$ and  satisfies
$$(\bar R - \frac {1}{t}) \,  \bar g_{ij}(t) = \nabla_i \nabla_j f.$$ 
\end{rem}

(iii) Finally, if $g_{ij}$ defines  a Yamabe steady soliton according to 
Definition \ref{eqn-defn},  then 
the (time dependent) metric $\bar g_{ij}$ defined by
$$\bar g_{ij}(t) =  \phi_t^*(g_{ij}), \qquad -\infty <  t  < \infty,$$
where  $\phi_t$ is an  one parameter family of diffeomorphisms generated by $\nabla f$,
is an eternal solution to the Yamabe flow \eqref{eqn-yf0} (also called a Yamabe steady soliton)
which   satisfies
$$\bar R  \,  \bar g_{ij}(t) = \nabla_i \nabla_j f.$$

Our first result establishes the rotational symmetry of locally conformally flat Yamabe solitons.

\begin{thm}[Rotational symmetry of Yamabe solitons]
\label{thm-unique}
All locally conformally flat complete Yamabe gradient  solitons  with positive 
sectional  curvature   have to be rotationally symmetric.
\end{thm}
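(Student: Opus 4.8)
The strategy is the classical one used for gradient Ricci solitons (Hamilton, Ivey) and for conformally flat shrinking Ricci solitons: exploit the soliton equation to show that the potential function $f$ has essentially round level sets, and then conclude that the metric is a warped product over an interval with round spheres as fibers. Starting from \eqref{eqn-ys}, $(R-\rho)g_{ij}=\nabla_i\nabla_j f$, the first observation is that away from critical points of $f$ the function $f$ is (after reparametrization) a distance-like function whose level sets $\Sigma_c=\{f=c\}$ have second fundamental form proportional to the induced metric, since $\nabla^2 f$ restricted to $T\Sigma_c$ equals $(R-\rho)\,g|_{T\Sigma_c}$. Hence each regular level set is totally umbilic. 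The plan is: (i) show $|\nabla f|$ is constant on connected components of level sets of $f$ (equivalently, $R$ is constant on level sets), so that $f$ is an isoparametric-type function; (ii) use local conformal flatness together with the umbilicity of the level sets and the Codazzi equation to show the level sets have constant sectional curvature, i.e. are round spheres (here positivity of the sectional curvature of $g$ is what forces the model fiber to be $S^{n-1}$ rather than flat or hyperbolic, and also guarantees $f$ is strictly convex hence has at most one critical point); (iii) assemble (i) and (ii) into a global warped-product (rotationally symmetric) structure $g = dr^2 + \varphi(r)^2\, g_{S^{n-1}}$.

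For step (i), the standard computation is to differentiate $|\nabla f|^2$: from the soliton equation $\nabla_j |\nabla f|^2 = 2\nabla^i f \,\nabla_i\nabla_j f = 2(R-\rho)\nabla_j f$, so $\nabla |\nabla f|^2$ is parallel to $\nabla f$, which already says $|\nabla f|^2$ is locally constant on level sets of $f$. Next one needs $R$ constant on level sets; contracting the soliton equation gives $\Delta f = n(R-\rho)$, and taking the divergence of \eqref{eqn-ys} and using the contracted second Bianchi identity yields a relation of the form $(n-1)\,\nabla_i R = -R_{ij}\nabla^j f$ (up to the usual constants). Combined with the conformal flatness condition — which expresses $R_{ij}$ in terms of $R$, $g_{ij}$ and the Schouten tensor, and in particular makes the Weyl tensor vanish — one obtains that $\ric(\nabla f,\cdot)$ is proportional to $\nabla f$; feeding this back shows $\nabla R$ is parallel to $\nabla f$, hence $R$ is constant on the (connected) level sets of $f$. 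This is the computational heart of the argument and where I expect the main technical effort: bookkeeping the conformally-flat curvature identities and the Bianchi-type soliton identities carefully enough to close the system.

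For step (ii), on a regular level set $\Sigma_c$ with unit normal $\nu=\nabla f/|\nabla f|$, the Gauss equation relates the intrinsic curvature of $\Sigma_c$ to the ambient curvature plus a term quadratic in the (umbilic) second fundamental form; since the ambient metric is conformally flat its curvature operator restricted to $\Sigma_c$ is again determined by the scalar curvature and Ricci curvature, both of which are now constant on $\Sigma_c$, so the induced metric on $\Sigma_c$ has constant sectional curvature. Positivity of the ambient sectional curvature forces this constant to be positive, so $(\Sigma_c, g|_{\Sigma_c})$ is (a quotient of) a round sphere; completeness and the fact that $\nabla^2 f>0$ near a critical point (so $f$ has a unique minimum and the level sets are genuine spheres, not lens spaces) pins it down to $S^{n-1}$. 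Finally, for step (iii): parametrize by arclength $r$ along the gradient flow of $f/|\nabla f|$, so $g = dr^2 + g_r$ with $g_r$ the metric on $\Sigma_{f(r)}$; the umbilicity $\nabla^2 f = (R-\rho)g$ on the fibers translates into $\partial_r g_r = 2\,(\text{scalar function of }r)\,g_r$, which integrates to $g_r = \varphi(r)^2\, g_{S^{n-1}}$ for a single warping function $\varphi$. This exhibits $g$ as rotationally symmetric, completing the proof. The one subtlety to handle with care is the behavior at critical points of $f$: one must rule out more than one critical point (done via strict convexity of $f$, which follows since $R-\rho>0$ — itself a consequence needing justification from positivity of curvature) and check smooth closing-up of the warped product at the pole, which is the usual removable-singularity argument for rotationally symmetric metrics.
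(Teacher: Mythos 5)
Your overall skeleton (umbilic level sets of $f$, constancy of $G=|\nabla f|^2$ and $R$ on level sets, Gauss equation plus vanishing Weyl tensor to get constant curvature fibers, then a warped-product assembly) matches the paper's, but the decisive step is asserted rather than proved. In step (i) you write that the identity $(n-1)\nabla_i R = -R_{ij}\nabla^j f$ ``combined with the conformal flatness condition'' yields that $\ric(\nabla f,\cdot)$ is proportional to $\nabla f$. That implication is exactly the hard content of the theorem and does not follow from bookkeeping the conformally flat curvature identities: vanishing of the Weyl tensor expresses $\Rem$ in terms of $\ric$ and $R$ but says nothing by itself about $\nabla f$ being an eigenvector of $\ric$. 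The paper devotes Proposition \ref{prop-ricci} to precisely this point, and proves it not by algebraic manipulation of Bianchi/Schouten identities but by a global analytic argument: it introduces Chow's Harnack quantity $Z(g,X)$ with $X$ defined by \eqref{equation-vf}, computes its evolution under the Yamabe flow (Lemma \ref{lemma-evolution-Z}), observes that $Z\equiv 0$ on the soliton, and concludes $A_{ij}\nabla_i f\nabla_j f\equiv 0$, where the eigenvalues of $A_{ij}$ are sums of squares of differences of Ricci eigenvalues; this forces $\ric$ to have at most two eigenvalues with $\nabla f$ in the simple eigendirection. (The analogous step for Ricci solitons in Cao--Chen requires a delicate Cotton-tensor argument; either way, a substantive mechanism is needed that your proposal does not supply.) A related omission: even after $R$ and $G$ are constant on $\Sigma_c$, constancy of the intrinsic sectional curvature requires knowing the tangential Ricci eigenvalue (equivalently $R_{11}$) is constant on $\Sigma_c$; the paper proves this by a separate Christoffel-symbol computation using $(n-1)|\nabla f|\nabla_1 R = R_{11}$ (Lemma \ref{lem-properties}(iii)), and it does not follow merely from $R$ being constant on the level set.

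A second, smaller gap concerns critical points. You ground uniqueness of the critical point in strict convexity of $f$, i.e.\ $R-\rho>0$, claiming this follows from positive curvature. That works for steadies ($\rho=0$) and expanders ($\rho=-1$), but fails for shrinkers ($\rho=1$): positive sectional curvature gives $R>0$, not $R>1$, so $\nabla^2 f=(R-1)g$ need not be positive and $f$ can a priori have a larger critical set. The paper handles this case differently (the critical set has measure zero, local rotational symmetry near regular level sets, and then an extension-of-the-warped-product argument with the trichotomy of splitting a line, one end, or the compact case covered by Proposition \ref{prop-compact}). Your proposal as written does not cover the shrinking case.
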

 
We will show at the end of section \ref{sec-rs}  that the result in \cite{CH} implies    that rotationally symmetric  complete 
Yamabe   solitons with nonnegative sectional curvature 
are globally conformally flat, namely $g_{ij} = u^{\frac 4{n+2}} \, dx^2$, where $dx^2$
denotes the standard metric on $\R^n$ and $u^{\frac 4{n+2}}$ is the conformal factor. 
We  have the following result.

\begin{prop} [PDE formulation of Yamabe  solitons]
\label{prop-ell00} Let $g_{ij}=u^{\frac 4{n+2}}\, dx^2$ be a  conformally flat  rotationally symmetric Yamabe gradient soliton  with positive sectional  curvature.   
Then, $u$ is a smooth solution of the elliptic equation
\begin{equation}\label{eqn-ell00}
\frac{n-1}{m} \, \Delta u^m + \beta  \, x \cdot \nabla u  +  \gamma \, u =0,
\qquad \mbox{on}\,\,\, \R^n
\end{equation}
where $\beta \geq 0$ and 
$$\gamma = \frac{2\beta + \rho}{1-m}, \qquad m = \frac{n-2}{n+2}.$$
In the case of the expanders, $\beta > 0$.
In addition, any smooth solution of the elliptic equation \eqref{eqn-ell00} with $\beta$ and $\gamma$
as above defines a gradient  Yamabe soliton.
\end{prop}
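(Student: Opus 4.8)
The plan is to pass to the global conformal gauge and convert the soliton equation \eqref{eqn-ys} into an equation for the conformal factor. By Theorem \ref{thm-unique} and the globally conformally flat normal form recorded just before the statement, the soliton may be written $g_{ij} = u^{4/(n+2)}\delta_{ij}$ on $\R^n$ with $u = u(r)$, $r = |x|$, smooth and positive. Since $R$ and $g$ are $O(n)$–invariant, $f$ may be taken rotationally symmetric as well: for each $A \in O(n)$ one has $\Hess(A^*f - f) = 0$, so $\nabla(A^*f - f)$ is parallel, and a complete metric on $\R^n$ of positive sectional curvature admits no nontrivial parallel vector field (else the de Rham splitting would contain a flat $\R$–factor); hence $A^*f - f$ is constant and, averaging over $O(n)$, $f = f(r)$. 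I will then use the two standard conformal identities for $g = e^{2\phi}\delta$ with $\phi = \tfrac{2}{n+2}\ln u$:
\[
\nabla_i\nabla_j f = \partial_i\partial_j f - \partial_i\phi\,\partial_j f - \partial_j\phi\,\partial_i f + (\nabla\phi\cdot\nabla f)\,\delta_{ij},
\qquad
R = -\frac{4(n-1)}{n-2}\,u^{-1}\,\Delta\big(u^{m}\big),
\]
where $\Delta,\nabla$ are Euclidean and $m = \frac{n-2}{n+2}$.

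Substituting $f = f(r)$, $u = u(r)$ into \eqref{eqn-ys} and decomposing into its $x_ix_j/r^2$– and $\delta_{ij}$–parts (the right side being a pure $\delta_{ij}$–multiple), the vanishing of the $x_ix_j/r^2$–part is the first order linear ODE $f_{rr} - f_r/r - \tfrac{4}{n+2}(u_r/u)\,f_r = 0$, which integrates to $f_r(r) = C\,r\,u(r)^{4/(n+2)}$ for a constant $C$. Inserting this in the $\delta_{ij}$–part gives $R - \rho = C\big(1 + \tfrac{2}{n+2}\,\tfrac{x\cdot\nabla u}{u}\big)$; substituting the formula for $R$, multiplying through by $u$ and by the constant $\tfrac{n+2}{4}$, one arrives precisely at \eqref{eqn-ell00} with $\beta = C/2$ and $\gamma = \tfrac{n+2}{4}(2\beta + \rho) = \tfrac{2\beta+\rho}{1-m}$.

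The sign assertions are where the curvature hypothesis really enters, and I expect this to be the main obstacle. Evaluating $R - \rho = 2\beta\big(1 + \tfrac{2}{n+2}\tfrac{x\cdot\nabla u}{u}\big)$ at $x = 0$ gives $R(0) = \rho + 2\beta$, and positivity of the sectional curvature forces $R(0) > 0$, hence $\beta > -\rho/2$; for steady ($\rho = 0$) and expanding ($\rho = -1$) solitons this already yields $\beta > 0$. For shrinkers ($\rho = 1$) it only gives $\beta > -1/2$, and excluding $\beta \in (-\tfrac12,0)$ requires analyzing the profile itself: reading \eqref{eqn-ell00} as a second order ODE for $u(r)$ and examining the competition between the friction term $\beta r\,u_r$ and the zeroth order term $\gamma u$ — for $\beta < 0$ with $u$ decreasing the former reinforces the decay instead of damping it — one shows that a negative $\beta$ is incompatible with $g$ being a complete metric on all of $\R^n$. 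Hence $\beta \ge 0$ in every case, with $\beta > 0$ for expanders.

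For the converse one reverses the computation. Given a smooth positive radial solution $u = u(r)$ of \eqref{eqn-ell00} with the stated $\beta,\gamma$, set $\rho := \gamma(1-m) - 2\beta$ and $f(x) := \int_0^{|x|} 2\beta\,s\,u(s)^{4/(n+2)}\,ds$; the latter is a smooth function of $|x|^2$, hence smooth on $\R^n$, and by construction $f_r = 2\beta r\,u^{4/(n+2)}$ satisfies $f_{rr} - f_r/r - \tfrac{4}{n+2}(u_r/u)f_r = 0$. Then \eqref{eqn-ell00} is equivalent to $R - \rho = 2\beta\big(1 + \tfrac{2}{n+2}\tfrac{x\cdot\nabla u}{u}\big)$, and the two conformal identities above recombine to give $\nabla_i\nabla_j f = (R-\rho)\,g_{ij}$; that is, $g_{ij} = u^{4/(n+2)}\delta_{ij}$ with potential $f$ is a gradient Yamabe soliton with constant $\rho$.
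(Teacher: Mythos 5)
Your computational core is correct and is essentially the paper's derivation carried out in a different gauge: the paper passes to cylindrical coordinates $w(s)\,ds_c^2$, extracts $f_s = Cw$ from the difference of the radial and spherical components of the soliton equation, and then converts back to Euclidean coordinates to get \eqref{eqn-ell00} with $\beta=\frac{n-1}{m}\theta$, whereas you stay in Euclidean coordinates, split the conformal Hessian into its $x_ix_j/r^2$ and $\delta_{ij}$ parts, and integrate $f_{rr}-f_r/r-\tfrac{4}{n+2}(u_r/u)f_r=0$ to $f_r=Cru^{4/(n+2)}$; these are the same computation, and your converse (defining $f$ by that integral, with the smoothness-at-the-origin remark) matches the paper's ``run the argument backwards.'' Your justification that the potential may be taken radial (parallel gradient of $A^*f-f$ versus positive curvature, then averaging over $O(n)$) is a genuine addition: the paper simply asserts that $f$ is radially symmetric. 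Your treatment of the signs also diverges usefully for two of the three cases: from $R(0)=2\beta+\rho$ and $R>0$ (positive sectional curvature) you get $\beta>0$ for steadies and expanders directly, where the paper instead invokes the Aronson--B\'enilan inequality for the associated fast-diffusion solution and, for expanders, the Herrero--Pierre uniqueness theorem; under the proposition's curvature hypothesis your pointwise argument is legitimate and more elementary.

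The genuine gap is the shrinker case. There $R(0)>0$ only yields $\beta>-\tfrac12$, and your exclusion of $\beta\in(-\tfrac12,0)$ is not a proof: the sentence ``one shows that a negative $\beta$ is incompatible with $g$ being a complete metric on all of $\R^n$'' is exactly the nontrivial statement to be established, and the heuristic offered (the friction term $\beta r u_r$ ``reinforcing the decay'' when $u$ is decreasing) presupposes monotonicity of $u$, which you have not proved at that stage, and in any case does not by itself rule out a global positive profile or show incompleteness of the resulting metric. This is precisely the point at which the paper does not argue directly either, but instead cites V\'azquez (Proposition 7.4 of \cite{Vaz1}): for the shrinker scaling $\gamma(1-m)=2\beta+1$, smooth global profiles solving \eqref{eqn-ell00} require $\gamma>\frac{1}{1-m}$, i.e.\ $\beta>0$. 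To close your argument you must either supply the ODE/phase-plane analysis behind that nonexistence statement (e.g.\ via the autonomous system in $X=ru_r/u$, $Y=r^2u^{1-m}$ used in Section 4) or cite such a result explicitly; as written, the claim $\beta\ge 0$ for shrinkers is asserted rather than proved.
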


The above Proposition reduces the classification of Yamabe solitons to
the classification of global smooth solutions of the elliptic equation \eqref{eqn-ell00}.

To simplify the notation, we will assume from now on that $\rho = 1$ in \eqref{eqn-ys}
(and hence in Proposition \ref{prop-ell00} as well)
in the case of the Yamabe shrinkers, and that  $\rho =-1$ in the case of the Yamabe expanders.  
This can be easily achieved by scaling our metric $g$. 

The following result provides the classification of radially symmetric  and smooth
solutions of the elliptic equation \eqref{eqn-ell00}. 

\begin{prop}[Classification of radially symmetric Yamabe solitons] \label{prop-cys}
Let  $m=\frac{n-2}{n+2}$. The elliptic equation \eqref{eqn-ell00} admits non-trivial radially symmetric  smooth  solutions
if and  only if $\beta \geq 0$ and $\gamma=\frac{2\beta +\rho}{1-m} >0$.
More precisely, we have:
\begin{enumerate}[i.]

\item  {\bf Yamabe shrinkers $\rho=1$:} 
For any $\beta >0$ and $\gamma=\frac{2\beta +1}{1-m}$,
there exists an one parameter
family $u_\lambda$, $\lambda >0$,  of smooth radially symmetric solutions of equation \eqref{eqn-ell00} on $\R^n$  of  slow-decay  rate at infinity, namely 
$u_\lambda(x) = O (|x|^{- \frac 2{1-m}})$, as $|x| \to \infty$. 
We will refer to them as cigar solutions. In the case 
 $\gamma  = \beta  n$ the solutions are given in the closed form
 \begin{equation}\label{eqn-ub}
u_\lambda (x) = \left ( \frac {C_n }{\lambda^2 + |x|^2} \right )^{\frac 1{1-m}}, \qquad C_n=(n-2)(n-1)
\end{equation}
and will refer to them as the Barenblatt solutions. 
When $\beta =0$ and $\gamma=\frac 1{1-m}$ equation \eqref{eqn-ell00} admits the explicit
solutions of  fast-decay rate
 \begin{equation}\label{eqn-shr}
u_\lambda (x) = \left ( \frac {C_n \, \lambda}{\lambda^2 + |x|^2} \right )^{\frac 2{1-m}}, \qquad C_n=
(4n (n-1))^\frac 12. 
\end{equation}
We will refer to them as the spheres.

\item {\bf Yamabe expanders  $\rho=-1$:} 
For any $\beta > 0$ and $\gamma=\frac{2\beta - 1}{1-m} > -\frac{1}{1-m} $,
there exists an one parameter
family $u_\lambda$, $\lambda >0$,  of smooth radially symmetric solutions of equation \eqref{eqn-ell00} on $\R^n$. 

\item {\bf Yamabe steady solitons  $\rho=0$:} For any $\beta >0$ and $\gamma 
=\frac{2\beta }{1-m}>0$, there exists an  
one parameter family 
$u=u_{\lambda}$, $\lambda >0$,  of smooth solutions of equation \eqref{eqn-ell00} on $\R^n$
which satisfy the asymptotic behavior
$u= O (( \frac{\log |x|}{|x|^2})^{\frac 1{1-m}})$,  as $|x| \to \infty$. We will refer to them as logarithmic cigars.  If $\beta = 0$ and therefore $\gamma = 0$, then $u$ is a constant, defining the euclidean metric on $\mathbb{R}^n$.
\end{enumerate}

In all of the above cases the solution $u_\lambda$ is uniquely determined by its value at the origin.  

\end{prop}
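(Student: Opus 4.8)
Since we are concerned with radially symmetric solutions, we write $u=u(r)$ with $r=|x|$; then \eqref{eqn-ell00} is equivalent to the ordinary differential equation
\begin{equation*}
\frac{n-1}{m}\,\frac{1}{r^{n-1}}\bigl(r^{n-1}(u^m)'\bigr)' + \beta\, r\, u' + \gamma\, u = 0, \qquad r>0,
\end{equation*}
together with the condition $u'(0)=0$ forced by smoothness at the origin. I would begin by multiplying by $r^{n-1}$ and using $\beta r^n u' = \beta (r^n u)' - \beta n\, r^{n-1} u$ to put the equation in divergence form; this exhibits the first integral $\frac{n-1}{m}\, r^{n-1}(u^m)' + \beta r^n u \equiv 0$ in the special case $\gamma = \beta n$, which integrates once more to the closed form \eqref{eqn-ub}. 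In the same spirit, a direct substitution, which uses crucially that $m = \mm$, checks that \eqref{eqn-shr} solves \eqref{eqn-ell00} when $\beta = 0$ and $\gamma = \frac{1}{1-m}$, giving the spheres; that these explicit profiles are smooth and positive on all of $\R^n$ is immediate.

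For the remaining, non-explicit solutions the argument splits into three parts. \emph{(a) The initial value problem and the parameter $\lambda$.} Wherever $u>0$ the ODE above is a regular second order equation for $r>0$ with an integrable singularity at $r=0$; a standard fixed-point argument on its integral form gives, for each $a>0$, a unique $C^\infty$ solution $u(\cdot\, ;a)$ with $u(0)=a$ and $u'(0)=0$, and uniqueness persists on any interval where the solution stays positive. Moreover \eqref{eqn-ell00} is invariant under the rescaling $u(x) \mapsto k\, u\bigl(k^{(1-m)/2} x\bigr)$, $k>0$ (the exponent being forced by balancing $\Delta u^m$ against $u$), so the solutions lie on a single scaling orbit $\{u_\lambda\}_{\lambda>0}$; since $u_\lambda(0)$ then ranges over all of $(0,\infty)$, this simultaneously yields existence for each prescribed value at the origin and the closing assertion that $u_\lambda$ is determined by $u_\lambda(0)$.

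\emph{(b) Phase plane, global existence and asymptotics.} The plan is to pass to the logarithmic variable $s=\log r$ and to scaling-invariant unknowns --- for instance $X = r(u^m)'/u^m$ together with a second quantity of the form $r^2 u^{1-m}$ --- converting the ODE into an autonomous planar system whose equilibria I would classify. The orbit issuing from the origin data starts inside the region $\{u>0\}$, and, using the monotonicity built into the system together with the sign conditions on $\beta$ and $\gamma$ appropriate to each case, one shows that it can neither leave $\{u>0\}$ nor reach $r=\infty$ in finite $s$-time; hence the solution is global on $[0,\infty)$ and its $\omega$-limit is an equilibrium from which the decay rate is read off by linearisation --- the slow rate $u_\lambda = O(|x|^{-2/(1-m)})$ for the shrinker cigars, and the algebraic rate for the expanders. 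The delicate point is the steady case, where the relevant equilibrium is non-hyperbolic: a center-manifold computation --- equivalently, a formal asymptotic expansion of the ODE about that equilibrium --- is needed to capture the logarithmic correction $u_\lambda = O\bigl((\log|x|/|x|^2)^{1/(1-m)}\bigr)$.

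\emph{(c) Non-existence and the degenerate case.} It remains to rule out the parameters not already covered, namely $\beta=0$ with $\rho\le 0$. When $\rho=0$ one has $\gamma=0$, so the equation reduces to $\Delta u^m = 0$ on $\R^n$; thus $u^m$ is a positive harmonic function, hence constant by Liouville, and $g$ is the euclidean metric. When $\rho=-1$ one has $\gamma<0$ and, setting $v=u^m$, the equation becomes $\Delta v = c\, v^{1/m}$ with $c>0$ and exponent $1/m = \frac{n+2}{n-2} > 1$; by the Keller--Osserman estimate such an equation admits no entire positive solution, a contradiction. Combined with (a)--(b), this establishes the stated dichotomy. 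The main obstacle throughout is part (b): keeping the orbit trapped in the positivity region for all $s$, and --- above all --- resolving the non-hyperbolic equilibrium governing the steady solitons, whose logarithmic decay factor is invisible to a naive linearisation.
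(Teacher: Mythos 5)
Your explicit parts are sound: the divergence-form first integral when $\gamma=\beta n$ does produce the Barenblatt profiles \eqref{eqn-ub}, direct substitution verifies the spheres \eqref{eqn-shr}, and the scaling invariance $u\mapsto k\,u(k^{(1-m)/2}x)$ combined with uniqueness of the initial value problem at the origin correctly yields the one-parameter structure and the fact that $u_\lambda$ is determined by $u_\lambda(0)$. Your case (c) is also correct as far as it goes ($u^m$ positive harmonic, hence constant, when $\beta=\gamma=0$; Keller--Osserman when $\rho=-1$, $\beta=0$), although you never address $\beta<0$.

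The genuine gap is part (b), which is where the whole content of items i--iii lies and which you do not execute: you announce a plan (``the plan is to pass to \ldots'', ``one shows that \ldots'', ``a center-manifold computation \ldots is needed'') for exactly the two steps you yourself single out as the main obstacles, namely (i) proving that the orbit issuing from the origin data stays positive and bounded for all $s$, so that $u_\lambda$ is defined on all of $\R^n$, and (ii) extracting the decay rates, above all the logarithmic rate $u=O((\log|x|/|x|^2)^{1/(1-m)})$ in the steady case, which you concede is invisible to linearisation at a non-hyperbolic equilibrium. Nothing in the proposal substantiates why ``the monotonicity built into the system together with the sign conditions on $\beta$ and $\gamma$'' traps the orbit; in the paper this is precisely where the real estimates enter. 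For the expanders one integrates the equation in divergence form over balls to obtain the lower barrier $u(r)\ge\bigl(\mu+\tfrac{\beta(1-m)}{2(n-1)}r^2\bigr)^{-1/(1-m)}$ and uses the sign of the scalar curvature (Proposition \ref{prop-scalar}) for the upper bound; for the steady solitons one passes to $v(s)=r^{2/(1-m)}u(r)$ satisfying $(v^m)_{ss}+\theta v_s-\tfrac{(n-2)^2}{4}v^m=0$, proves $v_s\ge 0$ and the first-integral bound $(v^m)_s\le Cv^m$ to get global existence, and then Proposition \ref{prop-asympt} establishes $cs\le w(s)\le Cs$ directly from the equation for $w$, which is exactly the logarithmic asymptotics; the shrinker case is delegated to V\'azquez's Proposition 7.4. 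A phase-plane treatment along your lines is certainly viable (it is essentially how V\'azquez argues), but until the trapping argument and the center-manifold (or equivalent ODE) analysis at the degenerate equilibrium are actually carried out, your text is an outline of a proof rather than a proof.
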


\begin{rem}\label{rem-par2} [Self-similar solutions of the fast-diffusion equation] There is a clear
connection between Yamabe solitons and self-similar solutions of the
fast diffusion equation
\begin{equation}\label{eqn-fde}
\frac{\partial \bar u}{\partial t} =  \frac{n-1}{m} \Delta \bar u^m, \qquad m=\frac{n-2}{n+2}.
\end{equation}

(i) {\em Yamabe shrinkers $\rho >0$: } The function $u$ is a solution of the
elliptic equation \eqref{eqn-ell00} if and only if 
$\bar u (x,t)= (T-t)^\gamma \, u(x, (T-t)^\beta)$ is an ancient  solution of \eqref{eqn-fde} which
vanishes at $T$. The existence of such solutions is proven in \cite{Vaz1} (Proposition 7.4)
and it was also noted  in \cite{GP}.

(ii) {\em Yamabe expanders $\rho <0$: } The function $u$ is a solution of the
elliptic equation \eqref{eqn-ell00} if and only if 
$\bar u (x,t)= t^{-\gamma} \, u(x, t^{-\beta})$ is a  solution of \eqref{eqn-fde} which
is defined for all  $0 < t < \infty$. 

(iii) {\em Yamabe steady solitons $\rho =0$: } The function $u$ is a solution of the
elliptic equation \eqref{eqn-ell00} if and only if 
$\bar u(x,t)  = e^{-\gamma t}  \, u(x, e^{-\beta  t})$ is an eternal  solution of \eqref{eqn-fde}. 
The existence of such solutions (without a proof)  was first noted in \cite{GP}.

In all of the above cases, $\bar g (t) = u^{\frac 4{n+2}}(\cdot,t)$ defines a solution of the
Yamabe flow \eqref{eqn-yf0}.\end{rem}

Combining the above results leads to the following classification of Yamabe solitons.

\begin{thm} The metric $g$ is a  complete  locally conformally flat Yamabe gradient  soliton 
with positive  sectional curvature  if and only if 
$g=u^{\frac 4{n+2}} \, dx^2$,  where $u$ satisfies the elliptic equation \eqref{eqn-ell00}, 
for some  $\beta \geq 0$ and $\gamma := \frac{2\beta +\rho}{1-m} >0$. The classification
of all such metrics is given in Proposition \ref{prop-cys}.
\end{thm}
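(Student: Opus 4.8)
The plan is to assemble the statement directly from the three principal results already in place. First I would prove the forward implication. Suppose $g$ is a complete locally conformally flat Yamabe gradient soliton with positive sectional curvature. By Theorem \ref{thm-unique} the metric must be rotationally symmetric, and then, as explained at the end of Section \ref{sec-rs} via the result of \cite{CH}, a rotationally symmetric complete Yamabe soliton with nonnegative sectional curvature is in fact globally conformally flat, so we may write $g = u^{\frac{4}{n+2}} \, dx^2$ on $\R^n$ with $u$ smooth and positive. Proposition \ref{prop-ell00} then shows that $u$ solves the elliptic equation \eqref{eqn-ell00} for some $\beta \geq 0$ with $\gamma = \frac{2\beta + \rho}{1-m}$. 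It remains to record that $\gamma > 0$: by Proposition \ref{prop-cys} a nontrivial radially symmetric smooth solution of \eqref{eqn-ell00} forces $\gamma > 0$, and since positive sectional curvature excludes the flat metric — the only solution of \eqref{eqn-ell00} with $\gamma = 0$ — the function $u$ is nontrivial and hence $\gamma > 0$.

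Conversely, suppose $g = u^{\frac{4}{n+2}}\, dx^2$ where $u$ is a smooth solution of \eqref{eqn-ell00} with $\beta \geq 0$ and $\gamma = \frac{2\beta+\rho}{1-m} > 0$. The final assertion of Proposition \ref{prop-ell00} guarantees that $g$ is a gradient Yamabe soliton, and being conformal to the Euclidean metric it is automatically locally conformally flat. The two remaining properties, completeness of $g$ and positivity of its sectional curvature, follow from the explicit formulas \eqref{eqn-ub}, \eqref{eqn-shr} and the asymptotic analysis carried out in the proof of Proposition \ref{prop-cys}, where the solutions $u_\lambda$ are constructed precisely so that the associated metric $u_\lambda^{\frac{4}{n+2}}\, dx^2$ is complete and has positive sectional curvature.

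Finally, the classification of all such metrics is exactly the content of Proposition \ref{prop-cys}, which we invoke verbatim to list the cigar, Barenblatt, sphere, expander, and logarithmic cigar solutions. The only point requiring care is the borderline case $\beta = \gamma = 0$, where \eqref{eqn-ell00} forces $u$ constant and $g$ is the flat metric; this is consistent with the strict inequality $\gamma > 0$ in the statement, since the flat metric does not have positive sectional curvature. I do not anticipate a genuine obstacle: the substantive work has already been carried out in Theorem \ref{thm-unique}, Proposition \ref{prop-ell00}, and Proposition \ref{prop-cys}, and the present theorem is their synthesis; the only mild subtlety is tracking the non-degeneracy condition (nontriviality of $u$, equivalently $\gamma>0$) consistently through both directions of the equivalence.
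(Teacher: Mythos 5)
Your synthesis follows the same route the paper intends (the theorem is stated there as a combination of Theorem \ref{thm-unique}, Proposition \ref{prop-conf-flat}, Proposition \ref{prop-ell00} and Proposition \ref{prop-cys}), but your forward-direction derivation of $\gamma>0$ has a genuine gap. You deduce $\gamma>0$ from the ``only if'' clause of Proposition \ref{prop-cys} together with the claim that the flat metric is the only solution of \eqref{eqn-ell00} with $\gamma=0$. That claim is established in the paper only for steady solitons (Claim \ref{claim-beta}: $\gamma=0$ forces $\beta=0$ and $u$ constant); for expanders $\gamma=\frac{2\beta-1}{1-m}$, so $\gamma\le 0$ whenever $0<\beta\le\frac12$, and the construction in the proof of Proposition \ref{prop-cys}(ii) (which requires only $\beta>0$, equivalently $\gamma>-\frac{1}{1-m}$) produces nontrivial global solutions in exactly that range. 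Hence the inference ``$u$ nontrivial $\Rightarrow\gamma>0$'' breaks down precisely in the expander case, which is the only case where $\gamma>0$ is an actual extra constraint (for shrinkers it is automatic from $\beta\ge0$, for steady solitons $\gamma=0$ is the flat case you already excluded). The argument consistent with the paper's results is curvature-based: positive sectional curvature gives $R>0$, and evaluating \eqref{eqn-RRR} at the origin gives $R(0)=(1-m)\gamma=2\beta+\rho$, whence $\gamma>0$; equivalently, Proposition \ref{prop-scalar}(iii) shows an expander with $\gamma<0$ has $R<0$, which is incompatible with positive sectional curvature.

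A smaller but real misattribution occurs in the converse: completeness and positivity of the sectional curvature of the metrics $u_\lambda^{\frac{4}{n+2}}dx^2$ are not part of the construction in Proposition \ref{prop-cys}. Positivity is the content of Proposition \ref{prop-sec} (which rests on Corollary \ref{cor-RRR} and Proposition \ref{prop-scalar}), and it is proved there for steady solitons and expanders with $\gamma>0$ but for shrinkers only under the stronger hypothesis $\beta>\frac{1}{n-2}$; so the ``if'' direction, as you (and indeed the theorem) phrase it, is not covered by the quoted results for shrinkers with $0<\beta\le\frac{1}{n-2}$. You should cite Proposition \ref{prop-sec} explicitly and either restrict to its hypotheses or supply an additional argument for the remaining shrinker range, rather than attributing these properties to the existence proof in Proposition \ref{prop-cys}.
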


In the case of compact gradient Yamabe solitons we have the following more general result:

\begin{prop}\label{prop-compact}
If $(M,g,f)$ is a compact gradient Yamabe soliton, not necessarily 
locally conformally flat,  
then $g$ is the metric of constant scalar curvature.  \end{prop}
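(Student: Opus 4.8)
The plan is to stay entirely at the level of the soliton equation \eqref{eqn-ys}, $(R-\rho)\,g_{ij}=\nabla_i\nabla_j f$, and to manufacture, by taking two successive divergences, an integral identity that forces $\int_M (R-\rho)^2\,dV=0$. Throughout I take $n=\dim M\ge 3$; in dimension two the Yamabe flow coincides (up to normalization) with the Ricci flow and the statement is classical, so nothing is lost by this restriction. The first step is simply to trace \eqref{eqn-ys}, which gives $\Delta f=n\,(R-\rho)$; integrating over the closed manifold $M$ yields $\int_M(R-\rho)\,dV=0$, i.e.\ $\rho$ is the mean value of $R$.

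The second step is to take the divergence of \eqref{eqn-ys} in the first index. The left-hand side produces $\nabla_j R$, while the right-hand side is the rough Laplacian of the $1$-form $df$; the Bochner identity, combined with the trace relation $\Delta f=n(R-\rho)$, converts this into the pointwise identity
$$R_{ij}\,\nabla^j f=-(n-1)\,\nabla_i R .$$
Taking the divergence of this identity once more, and using the contracted second Bianchi identity $\nabla^i R_{ij}=\tfrac12\nabla_j R$ together with the trace $R_{ij}\nabla^i\nabla^j f=R\,(R-\rho)$ of \eqref{eqn-ys}, yields
$$\tfrac12\,\langle\nabla R,\nabla f\rangle+R\,(R-\rho)=-(n-1)\,\Delta R .$$

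The final step is to integrate this last identity over $M$. The term $\int_M\Delta R\,dV$ vanishes, and integrating by parts gives $\int_M\langle\nabla R,\nabla f\rangle\,dV=-\int_M R\,\Delta f\,dV=-n\int_M R\,(R-\rho)\,dV$, so we are left with $\bigl(1-\tfrac n2\bigr)\int_M R\,(R-\rho)\,dV=0$. Since $\int_M(R-\rho)\,dV=0$ from the first step, this is equivalent to $\bigl(1-\tfrac n2\bigr)\int_M (R-\rho)^2\,dV=0$, and for $n\ge 3$ it forces $R\equiv\rho$; thus $g$ has constant scalar curvature.

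I expect the only delicate point to be the sign bookkeeping in the two curvature identities — the commutation of covariant derivatives acting on $df$ (which is where the factor $n-1$ enters) and the contracted second Bianchi identity — since these are precisely what combine to give the decisive coefficient $1-\tfrac n2$; once they are in place the remaining manipulations are routine integrations by parts on a closed manifold.
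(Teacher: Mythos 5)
Your argument is correct, and it reaches the conclusion by a somewhat different route than the paper. The paper starts from $n^2\int_M(R-\rho)^2=\int_M(\Delta f)^2$, integrates by parts twice to get the Bochner-type identity $\int_M(\Delta f)^2=\int_M|\nabla^2f|^2+\int_M\ric(\nabla f,\nabla f)$, and then invokes its identity \eqref{eq-yam-id} relating $\ric(\nabla f,\cdot)$ to $\nabla R$ before one more integration by parts; you instead take two pointwise divergences of the soliton equation to arrive at the scalar elliptic identity
$$(n-1)\Delta R+\tfrac12\langle\nabla R,\nabla f\rangle+R\,(R-\rho)=0,$$
which is precisely Chow's equation that the paper itself quotes later (in the proof of Proposition \ref{prop-scalar}), and then integrate it once, using $\Delta f=n(R-\rho)$ and $\int_M(R-\rho)=0$ to get $\bigl(1-\tfrac n2\bigr)\int_M(R-\rho)^2=0$. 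Two remarks on the comparison. First, your route never needs the $|\nabla^2f|^2$ term or the integrated Bochner formula, and—more significantly—it is insensitive to the sign in the identity relating $\ric(\nabla f,\cdot)$ and $\nabla R$: whichever sign one takes, the resulting $\pm(n-1)\Delta R$ integrates to zero. Note that with the commutation convention $\Delta\nabla_i f=\nabla_i\Delta f+R_{ij}\nabla^j f$ (the one used inside the paper's own proof of this proposition), the soliton identity comes out as $\ric(\nabla f,\cdot)=-(n-1)\nabla R$, which is the sign you use, rather than the sign displayed in \eqref{eq-yam-id}; the paper's bookkeeping in this proof does depend on that sign, so your sign-robust version is a small but genuine advantage. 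Second, your coefficient $1-\tfrac n2$ degenerates at $n=2$, so your explicit restriction to $n\ge3$ (with the 2D case dispatched as classical) is needed and appropriate; the paper works throughout with $n\ge3$, so nothing is lost.
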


Note that in this result we do not make any assumptions on the sign of sectional curvatures. 

{\bf Acknowledgements:} We would like to thank Robert Bryant and Richard Hamilton for many useful discussions.

\section{Yamabe solitons are Rotationally symmetric}\label{sec-rs}

In this section we will establish the rotational symmetry of 
locally conformally flat Yamabe solitons with nonnegative sectional curvature, 
Theorem \ref{thm-unique}. 
Our  proof is  inspired by the proof of the analogous theorem for complete gradient steady Ricci solitons in \cite{CC} by Cao and Chen.

\begin{proof}[Proof of Theorem \ref{thm-unique}]
We will first deal with the case of steady solitons  
\begin{equation}
\label{eq-sol100}
R\, g_{ij} = \nabla_i\nabla_j f
\end{equation}
 where we refer to $f$ as to a potential function. 
 The other two cases of shrinkers and expanders  can be treated in the same way as it will be explained at the end of the proof. 
 Since $R > 0$, the potential function $f$ is strictly convex and therefore it  has at most one critical point. Denote  by $G = |\nabla f|^2$  
 and observe that  in any neighborhood, where $G \neq 0$, of the level  surface 
$$\Sigma_c := \{x\in M : f(x) = c\}$$
for  a regular value $c$ of $f$,  we can express the metric $g$ as
\begin{equation}
\label{eq-met-new}
g = \frac{1}{G(f,\theta)}df^2 + g_{ab}(f,\theta)\, d\theta^a\, d\theta^b
\end{equation}
where $(\theta^2, \dots, \theta^n)$ denote  intrinsic coordinates for $\Sigma_c$. 

We wish  to show that $G = G(f)$, $g_{ab} = g_{ab}(f)$, and that $(\Sigma_c,g_{ab})$ is a space form of positive constant curvature. This  would mean that $g$ has the  form 
\begin{equation}
\label{eq-metric-form}
g = \psi^2(f) \, df^2 + \phi^2(f) \, g_{S^{n-1}},
\end{equation}
where $g_{S^{n-1}}$ denotes the standard metric on the unit sphere $S^{n-1}$. As in \cite{CC} it can be argued that $f$ has exactly one critical point, leading to the fact that $g$ is a rotationally symmetric metric on $\mathbb{R}^n$.

Next we derive some identities on Yamabe solitons that will be used later in the paper.

\begin{lem}
If $G := |\nabla f|^2$, then
\begin{equation}
\label{eq-G}
\nabla G = 2R\, \nabla f.
\end{equation}
Furthermore,
\begin{equation}
\label{eq-yam-id}
(n-1)\nabla R = \ric \, (\nabla f, \cdot).
\end{equation}
\end{lem}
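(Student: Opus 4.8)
The plan is to extract both identities directly from the defining soliton equation $(R-\rho)\,g_{ij}=\nabla_i\nabla_j f$ by differentiating and tracing; no global analysis is needed, just tensor calculus. For the first identity I would differentiate $G=g^{ij}\,\nabla_i f\,\nabla_j f$ and substitute the Hessian of $f$:
\begin{equation*}
\nabla_k G = 2\,g^{ij}\,(\nabla_k\nabla_i f)\,\nabla_j f = 2\,(R-\rho)\,g^{ij}g_{ki}\,\nabla_j f = 2\,(R-\rho)\,\nabla_k f,
\end{equation*}
which in the steady case $\rho=0$ is exactly $\nabla G = 2R\,\nabla f$ (and in general reads $\nabla G = 2(R-\rho)\nabla f$). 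This is a one-line computation.

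For the second identity I would first trace \eqref{eqn-ys} over $i,j$ to get $\Delta f = n(R-\rho)$. Then I take the divergence $\nabla^i$ of \eqref{eqn-ys}: the right-hand side $\nabla^i\big((R-\rho)g_{ij}\big)$ produces $\nabla_j R$, while the left-hand side $\nabla^i\nabla_i\nabla_j f$ must be rewritten using symmetry of the Hessian ($\nabla_i\nabla_j f=\nabla_j\nabla_i f$) and then commuting the two outermost covariant derivatives on the one-form $df$ via the Ricci identity. This yields $\nabla^i\nabla_i\nabla_j f = \nabla_j(\Delta f) - R_{jk}\nabla^k f$. Substituting $\Delta f = n(R-\rho)$ and noting $\nabla(R-\rho)=\nabla R$, the identity becomes $\nabla_j R = n\,\nabla_j R - R_{jk}\nabla^k f$, i.e. $(n-1)\nabla_j R = R_{jk}\nabla^k f = \ric(\nabla f,\cdot)_j$, as claimed.

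The only genuinely delicate point is bookkeeping with the sign and index conventions in the commutation formula $[\nabla_k,\nabla_j]\omega_i = -R_{kji}{}^l\omega_l$ and in the contraction $g^{ik}R_{kji}{}^l = R_j{}^l$, so that the Ricci term enters with precisely the sign needed for the stated form of \eqref{eq-yam-id}; a quick check against the round sphere (where $\ric>0$ while $\nabla R=\nabla f=0$) or against a genuine soliton confirms the signs. Everything else reduces to substituting the defining equation and tracing, so I do not anticipate any real obstacle beyond this routine care with conventions.
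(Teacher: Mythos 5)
Your derivation of the first identity is fine (and your version $\nabla G = 2(R-\rho)\nabla f$, specializing to $2R\,\nabla f$ in the steady case, is exactly what the paper does by differentiating $G$ and substituting the Hessian). For the second identity you follow the same route as the paper -- differentiate the soliton equation, commute derivatives, trace, and use $\Delta f = n(R-\rho)$ -- but the step you yourself single out as the delicate one is where the argument goes wrong: the commutation formula you quote, $\nabla^i\nabla_i\nabla_j f = \nabla_j(\Delta f) - R_{jk}\nabla^k f$, has the wrong sign on the Ricci term. The correct identity is $\nabla^i\nabla_i\nabla_j f = \nabla_j(\Delta f) + R_{jk}\nabla^k f$, i.e. $\operatorname{div}(\nabla^2 f) = d\,\Delta f + \ric(\nabla f,\cdot)$; this is precisely the identity underlying Bochner's formula, and it is the version the paper itself invokes in the proof of Proposition \ref{prop-compact} (``$\nabla_i\nabla_j\nabla_j f = \nabla_j\nabla_i\nabla_j f - R_{ik}\nabla_k f$''). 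A concrete check on the round sphere with $f=z$ (so $\nabla^2 z=-z\,g$, $\Delta z=-2z$, $\ric=g$) gives $\operatorname{div}(\nabla^2 z)=-dz = d\Delta z + \ric(\nabla z,\cdot)$, and only the $+$ sign works; your proposed sanity check on the round sphere is vacuous, since there $\nabla f=\nabla R=0$ and both sides vanish identically, so it cannot detect the sign.

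With the correct commutation, the divergence of $(R-\rho)g_{ij}=\nabla_i\nabla_j f$ gives $\nabla_j R = n\nabla_j R + R_{jk}\nabla^k f$, hence $(n-1)\nabla R = -\ric(\nabla f,\cdot)$, not $+\ric(\nabla f,\cdot)$. You should be aware that the paper has the same tension internally: its penultimate displayed equation $n\nabla_i R + R_{il}\nabla^l f = \nabla_i R$ yields the minus sign, and the plus sign in the stated conclusion \eqref{eq-yam-id} is a sign slip (harmless for the later symmetry argument, which only uses that $\ric(\nabla f, e_a)=0$ in directions tangent to the level sets of $f$, so the sign is immaterial there). So your write-up reproduces the stated identity only because of a compensating sign error in the Ricci commutation formula; as a proof it does not stand, and the fix is to use the correct commutation and record the identity as $(n-1)\nabla R = -\ric(\nabla f,\cdot)$, noting that only its tangential vanishing is needed downstream.
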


\begin{proof}
Fix $p\in M$ and  choose  normal coordinates around $p$ so that the metric matrix is diagonal at $p$. Then,
$$\nabla_i G = 2\, \nabla_i\nabla_j f\,  \nabla_j f = 2Rg_{ij}\nabla_j f$$
implying $\nabla_i G = 2R\, \nabla_i f$. In other words,
$$\nabla G = 2R\, \nabla f.$$

Moreover,  continuing to compute  in normal coordinates around $p\in M$, if we apply $\nabla_k$ to our soliton equation $\nabla_i\nabla_j f = R\, g_{ij}$ we obtain
$$\nabla_k\nabla_i\nabla_j f = \nabla_k R \,  g_{ij}$$
implying that 
$$\nabla_i\nabla_k\nabla_j f + R_{kil}^j\nabla^l f = \nabla_k R\,  g_{ij}.$$
Tracing  the previous equation in $k$ and $j$, we obtain 
$$\nabla_i \Delta f + R_{il}\nabla^l f = \nabla_i R.$$
On the other hand, after tracing the soliton equation we get
$$\Delta f = nR$$
and therefore
$$n\nabla_i R + R_{il}\nabla^l f = \nabla_i R.$$
We conclude that  the following identity  holds on any Yamabe steady soliton:
$$(n-1)\nabla R = \ric \, (\nabla f, \cdot).$$
\end{proof}

In the following Proposition we will show that  the Ricci tensor of our steady soliton metric $g$ has only two distinct eigenvalues. Cao and Chen  proved  the same  theorem in \cite{CC}  using  the properties of the  Cotton tensor together with the Ricci soliton equation. Our proof 
uses  the Harnack expression for the Yamabe flow that has been introduced by Chow in \cite{Ch}.

\begin{prop}
\label{prop-ricci}
At any point $p\in \Sigma_c$, the Ricci tensor of $g$ has either a unique eigenvalue $\lambda$, or 
it has two distinct eigenvalues $\lambda$ and $\mu$, of multiplicity $1$ and $n-1$ respectively. In either case, $e_1 = \frac{\nabla f}{|\nabla f|}$ is an eigenvector with eigenvalue $\lambda$. Moreover, for any orthonormal basis $e_2, \dots e_n$ tangent to the level surface $\Sigma_c$ at $p$, we have
\begin{enumerate}[i.]
\item
$\ric(e_1,e_1) = \lambda$
\item
$\ric(e_1,e_b) = R_{1b} = 0, \,\,\,\, b = 2, \dots n$
\item
$\ric(e_a,e_b) = R_{aa}\delta_{ab}, \,\,\, a,b = 2, \dots, n$,
\end{enumerate}
where either $R_{11} = \dots R_{nn} = \lambda$ or $R_{11} = \lambda$ and $R_{22} = \dots = R_{nn} = \mu$.
\end{prop}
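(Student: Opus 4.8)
The plan is to exploit the Harnack/Chow expression for the Yamabe flow, which on a steady Yamabe soliton reduces to an algebraic identity involving $\ric$, $R$, and $\nabla f$. First I would recall the lemma just proven: $\nabla G = 2R\,\nabla f$ and $(n-1)\nabla R = \ric(\nabla f,\cdot)$. The second identity already shows that $\nabla f$ is (pointwise, where $\nabla R \neq 0$) an eigenvector of $\ric$, with eigenvalue $\lambda := (n-1)\,|\nabla R|\,/\,|\nabla f|$ read off appropriately, or more cleanly: $\ric(e_1,e_1) = (n-1)\,\langle \nabla R, e_1\rangle / |\nabla f|$ and $\ric(e_1,e_b) = (n-1)\langle\nabla R,e_b\rangle/|\nabla f|$. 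So for (ii) it suffices to show $\nabla R$ is parallel to $\nabla f$, i.e. that $R$ is constant on the level sets $\Sigma_c$. This is where Chow's Harnack quantity for the Yamabe flow enters: the trace Harnack expression, evaluated on the steady soliton where $\partial_t g_{ij} = \mathcal{L}_{\nabla f} g_{ij}$ and $\partial_t R$ is governed by the soliton structure, yields a relation forcing the gradient terms to align. Concretely, differentiating $(n-1)\nabla R = \ric(\nabla f,\cdot)$ again and contracting with $\nabla f$, combined with the second Bianchi identity $\nabla^i R_{ij} = \tfrac12 \nabla_j R$ and the soliton equation $\nabla_i\nabla_j f = R g_{ij}$, should produce $\ric(\nabla f, \nabla f)$ in two ways and pin down $\nabla R$ modulo $\nabla f$.

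Next, for (iii) — that in the orthonormal frame $\{e_1,\dots,e_n\}$ with $e_1 = \nabla f/|\nabla f|$ the Ricci tensor is diagonal — I would argue as follows. Having established $\ric(e_1,e_b)=0$ for $b\geq 2$, the orthogonal complement $e_1^\perp = T\Sigma_c$ is $\ric$-invariant, so $\ric|_{T\Sigma_c}$ is a symmetric bilinear form. To show it is a multiple of the identity I would use local conformal flatness: the Weyl tensor vanishes, so the full curvature tensor is determined by $\ric$ and $R$ via the Kulkarni–Nomizu expression. The second Bianchi identity then becomes a first-order PDE system on $\ric$ (the Cotton tensor vanishes in the conformally flat case), which, together with the constancy of $R$ on $\Sigma_c$ and the soliton structure of the warped-product metric \eqref{eq-met-new}, forces the $n-1$ eigenvalues along $\Sigma_c$ to coincide at each point. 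Equivalently, one can compute the sectional curvatures of the metric in the form $g = G^{-1}df^2 + g_{ab}\,d\theta^a d\theta^b$: conformal flatness plus $R_{1b}=0$ makes all the "mixed" sectional curvatures $K(e_1,e_a)$ equal (they depend only on $G$ and its $f$-derivative), and then vanishing Weyl forces $K(e_a,e_b)$ to be equal among themselves as well, giving $R_{22}=\dots=R_{nn}=:\mu$.

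The main obstacle I anticipate is the bookkeeping in the Harnack-expression step: Chow's trace Harnack for the Yamabe flow is a somewhat involved quadratic expression in $R$, $\ric$, $\nabla R$, and a vector field, and one must verify that, when the vector field is taken to be $\nabla f$ and the soliton identities are substituted, everything collapses to the statement "$\nabla R \parallel \nabla f$." Getting the signs and the coefficient $(n-1)$ consistent with the normalization $\rho=0$ is the delicate part. A secondary technical point is that all of this is \emph{a priori} valid only on the open set where $G=|\nabla f|^2 \neq 0$, i.e. away from the single critical point of $f$; one then extends the conclusion to the critical point by continuity of $\ric$ and $R$. Once (ii) is in hand, (i) is just a definition ($\lambda := \ric(e_1,e_1)$) and (iii) follows from the conformal-flatness argument above; the dichotomy "$\lambda = \mu$ or $\lambda \neq \mu$ with multiplicities $1$ and $n-1$" is then automatic.
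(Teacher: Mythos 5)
Your proposal correctly identifies the right tool (Chow's Harnack quantity on the steady soliton) but leaves precisely the decisive step unproven. The paper's argument is: on a steady soliton one has $(n-1)\Delta R+\tfrac12\langle\nabla R,\nabla f\rangle+R^2=0$, so with the vector field $X$ of \eqref{equation-vf} (which, by \eqref{eq-yam-id} and invertibility of $\ric>0$, is $-\nabla f$) the Harnack expression satisfies $Z(g,X)\equiv 0$ for all time; feeding this into the evolution equation \eqref{eq-evolution-Z}, whose right-hand side is a sum of nonnegative terms when $\ric>0$, forces $A_{ij}\nabla_i f\,\nabla_j f\equiv 0$; and since in a Ricci-diagonalizing frame $A$ is diagonal with entries $\nu_i=\tfrac{1}{2(n-1)(n-2)}\sum_{k,l\neq i,\,k>l}(\lambda_k-\lambda_l)^2$, the vanishing of $\sum_i\nu_i(\nabla_i f)^2$ immediately yields the dichotomy: either all Ricci eigenvalues coincide, or $\nabla f$ has a single nonzero component in that frame (hence is an eigenvector) and the remaining $n-1$ eigenvalues are equal. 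In your write-up this whole chain is replaced by ``should produce \dots and pin down $\nabla R$ modulo $\nabla f$'' and ``the main obstacle I anticipate is the bookkeeping''; that bookkeeping \emph{is} the proof, and it is exactly what Lemma \ref{lemma-evolution-Z} together with the vanishing of $Z$ supplies. Differentiating $(n-1)\nabla R=\ric(\nabla f,\cdot)$ and using the contracted Bianchi identity, as you suggest, does not by itself force $\nabla R\parallel\nabla f$.

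There is also a substantive error in your route to part (iii): you claim that vanishing of the Weyl tensor together with $R_{1b}=0$ forces the tangential sectional curvatures, hence the eigenvalues $R_{22},\dots,R_{nn}$, to be equal. That is false as a pointwise statement --- a locally conformally flat metric can perfectly well have Ricci eigenvalues that are pairwise distinct, and $W=0$ only expresses the full curvature in terms of $\ric$ and $R$; it imposes no equality among the eigenvalues. In the paper the equality $R_{22}=\dots=R_{nn}$ is not a consequence of conformal flatness alone but of the vanishing of $A_{ij}\nabla_if\nabla_jf$ via the explicit eigenvalues $\nu_i$ above (Cao--Chen obtain the analogous statement from a Cotton-tensor computation, which you mention but likewise do not carry out). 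So both the eigenvector statement and the multiplicity statement hinge on the same unexecuted Harnack computation, and the fallback argument you offer for the multiplicities does not work.
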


The proof of Proposition \ref{prop-ricci} will make use of the evolution of the 
Harnack expression for the scalar curvature, which has been introduced  by Chow in \cite{Ch}.  We will compute its evolution and express it   in a form that is convenient for our purposes. This computation does not depend on having the soliton equation, but only on evolving the metric by the Yamabe flow. 

Assume that we have a complete eternal  locally conformally flat Yamabe flow
\begin{equation}\label{eqn-yf}
g_t = - R\, g, \qquad - \infty < t < +\infty
\end{equation}
where $g$ has  positive Ricci curvature. 
Choose a vector field $X$ to
satisfy
\begin{equation}
\label{equation-vf}
\nabla_i R + \frac{1}{n-1}R_{ij}X_j = 0. 
\end{equation}
The vector field $X$ is well defined since $\ric > 0$ (and therefore
defines an invertible matrix). Following Chow \cite{Ch} we define the
Harnack expression for the eternal Yamabe flow, namely 
\begin{equation}
\label{equation-har}
Z(g,X) = (n-1)\Delta R + \langle\nabla
R, X\rangle + \frac{1}{2(n-1)}\,  R_{ij}X_iX_j + R^2.
\end{equation}
Note that in \eqref{equation-har} we have  dropped  the term $\frac{R}{t}$, due to the fact we have a  solution that is defined up to $t=-\infty$.

To simplify the notation, we define $\Box = \partial_t - (n-1)\, \Delta$. 

\begin{lem}
\label{lemma-evolution-Z} The quantity $Z$ defined by \eqref{equation-har} evolves by 
\begin{equation}
\label{eq-evolution-Z}
\Box{Z} = RZ + A_{ij}X_iX_j + g^{kl}R_{ij}(Rg_{ik} -
\nabla_iX_k)(Rg_{jl} - \nabla_jX_l)
\end{equation}
where $A_{ij}$ is the same
matrix that Chow defines by $(3.13)$ in \cite{Ch}.
\end{lem}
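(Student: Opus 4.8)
The plan is to compute the evolution of each of the four terms in $Z$ under the Yamabe flow $g_t = -Rg$ and then assemble them. This is a direct but lengthy computation of the same flavor as Chow's original Li--Yau--Hamilton estimate for the Yamabe flow in \cite{Ch}, so the strategy is to follow \cite{Ch} closely while tracking where the $R/t$ term has been dropped and where the freedom in choosing $X$ enters. First I would record the basic evolution equations under \eqref{eqn-yf}: the evolution of $R$ (for locally conformally flat metrics the Yamabe flow is the fast-diffusion--type equation $\partial_t R = (n-1)\Delta R + R^2$ up to lower order, but more precisely one uses $\Box R = R^2 + |\ric|^2$ in general and the conformally flat simplification that the full curvature tensor is determined by the Ricci tensor), the evolution of the Ricci tensor $R_{ij}$, the evolution of the inverse metric $\partial_t g^{ij} = R g^{ij}$, and the commutator identities $[\partial_t, \Delta]$ and $[\nabla_i, \Delta]$ acting on functions, which produce curvature terms.

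Next I would differentiate $Z = (n-1)\Delta R + \langle \nabla R, X\rangle + \tfrac{1}{2(n-1)} R_{ij}X_iX_j + R^2$ term by term. For the $\langle \nabla R, X\rangle$ and $R_{ij}X_iX_j$ pieces I would \emph{not} try to compute $\partial_t X$ or $\Box X$ directly from \eqref{equation-vf}; instead I would use the defining relation \eqref{equation-vf} to substitute $\nabla_i R = -\tfrac{1}{n-1}R_{ij}X_j$ wherever possible, which is exactly the device that lets Chow close the computation without controlling the evolution of $X$. The terms involving $\Box X$ or $\nabla \Box X$ should either cancel against each other or get absorbed, after using \eqref{equation-vf}, into the quadratic form $g^{kl}R_{ij}(Rg_{ik} - \nabla_i X_k)(Rg_{jl} - \nabla_j X_l)$; the appearance of the combination $Rg_{ik} - \nabla_i X_k$ is the signal that one should complete the square. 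The remaining genuinely curvature-quadratic terms (products of $\ric$, $\nabla \ric$, $\Rem$) are collected into the matrix $A_{ij}$ contracted with $X_iX_j$, and here I would simply cite that $A_{ij}$ is precisely Chow's $(3.13)$, so that no new estimate on $A_{ij}$ is needed at this stage and the identity \eqref{eq-evolution-Z} is an exact equality, not an inequality.

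The one structural point I would check carefully is the effect of having dropped $\tfrac{R}{t}$: in Chow's setting the Harnack quantity is $\tilde Z = Z + \tfrac{R}{t}$ and its evolution picks up extra terms from $\Box(R/t) = \tfrac{1}{t}\Box R - \tfrac{R}{t^2}\cdot(\text{something})$. Since our solution is eternal and we work directly with $Z$, I would verify that the terms $RZ$, $A_{ij}X_iX_j$ and the squared term on the right of \eqref{eq-evolution-Z} are exactly what remains of Chow's right-hand side after setting those $1/t$ contributions to zero; equivalently, one can run Chow's computation formally with ``$1/t = 0$.'' This bookkeeping, together with the sign conventions for $\Rem$ and $\ric$ in the conformally flat reduction, is the main place an error could hide.

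The main obstacle is the sheer length and delicacy of the term-by-term differentiation: keeping track of all curvature terms produced by the two commutators $[\partial_t, \nabla]$ and $[\nabla, \Delta]$, and correctly matching the leftover quadratic-in-curvature terms against Chow's $A_{ij}$ rather than re-deriving $A_{ij}$ from scratch. There is no conceptual difficulty beyond what is already in \cite{Ch}; the work is to organize the algebra so that the right-hand side collapses into the three clean pieces $RZ + A_{ij}X_iX_j + g^{kl}R_{ij}(Rg_{ik} - \nabla_iX_k)(Rg_{jl} - \nabla_jX_l)$, and to confirm that dropping $R/t$ is consistent. I would present the computation in the order: (1) evolution of $R$, $\ric$, $g^{ij}$ and commutators; (2) evolution of $\Delta R$; (3) evolution of $\langle \nabla R, X\rangle$ using \eqref{equation-vf}; (4) evolution of $\tfrac{1}{2(n-1)}R_{ij}X_iX_j$ again using \eqref{equation-vf}; (5) evolution of $R^2$; (6) summation, completing the square to produce the $(Rg_{ik}-\nabla_iX_k)$ factors and identifying the residue with $RZ + A_{ij}X_iX_j$.
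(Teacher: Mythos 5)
Your overall route is the same as the paper's: follow Chow's Harnack computation, use the defining relation \eqref{equation-vf} so that the evolution of $X$ never has to be computed, reduce the curvature terms via local conformal flatness, identify the quadratic-in-$X$ residue with Chow's matrix $A_{ij}$, and complete the square to get the factor $Rg_{ik}-\nabla_iX_k$. The only structural difference is that the paper does not redo the term-by-term differentiation at all: it quotes Chow's already-derived evolution identity for $Z$ (the display \eqref{equation-Z}, i.e.\ Chow's formula with every $1/t$ term dropped, legitimate because the solution is eternal) and then performs the algebra, so your steps (1)--(2) are replaced by a citation and the real content is steps (3)--(6).

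Two points in your plan need correcting, one factual and one about mechanism. First, the evolution of the scalar curvature under the Yamabe flow $g_t=-Rg$ is exactly $\partial_t R=(n-1)\Delta R+R^2$ in every dimension and without any conformal flatness or ``up to lower order'' qualification; your alternative formula $\Box R=R^2+|\ric|^2$ is a Ricci-flow--type identity and is wrong here, and if you actually fed it into steps (1)--(2) the computation would not close. Second, the $\Box X$ terms are not ``absorbed into the quadratic form'': they enter only through the combination $\bigl[\nabla_jR+\tfrac{1}{n-1}R_{ij}X_i\bigr]\Box X_j$, which vanishes identically by \eqref{equation-vf}, and the first-derivative cross terms vanish because $\nabla_k\bigl(R_{ij}X_j+(n-1)\nabla_iR\bigr)=0$. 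The square term has a different origin, and this is the one genuinely non-obvious step your outline glosses over: one must split $3RZ$ as $RZ+2RZ$ and rewrite $2RZ-R^3$ by tracing the covariant derivative of \eqref{equation-vf} and invoking the contracted Bianchi identity $\nabla_iR_{ij}=\tfrac12\nabla_jR$, which yields $2(n-1)R\Delta R=\tfrac{1}{n-1}RR_{ij}X_iX_j-2RR_{ij}\nabla_iX_j$ and hence $2RZ-R^3=R^3-2RR_{ij}\nabla_iX_j$; only then does $R^3-2RR_{ij}\nabla_iX_j+R_{ij}\nabla_kX_i\nabla_kX_j$ assemble into $g^{kl}R_{ij}(Rg_{ik}-\nabla_iX_k)(Rg_{jl}-\nabla_jX_l)$. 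With those two repairs your plan matches the paper's proof; the identification of $A_{ij}$ with Chow's $(3.13)$, after using the conformally flat expression of $R_{kijl}$ in terms of $\ric$ and the substitution $\nabla_iR=-\tfrac{1}{n-1}R_{ij}X_j$, is exactly what the paper does as well.
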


\begin{proof}
We have the following equation due to Chow (\cite{Ch}) after
dropping all terms with $1/t$:
\begin{equation}
\begin{split}
\label{equation-Z}
\Box{Z} &= 3RZ - R^3 + \frac{1}{2}(R_{kijl} - R_{ij}^2)X_iX_j \\
&- \frac{1}{2(n-1)}RR_{ij}X_iX_j - \frac{(n-1)(n-2)}{2}|\nabla
R|^2 \\
&- (n-1)R_{ij}\nabla_i R \, X_j + \langle\nabla R, \Box{X}\rangle +
\frac{R_{ij}X_i}{n-1} \, \Box{X_j}\\
&- 2R_{ij}\nabla_k X_i\nabla_k X_j - 2\nabla_kR_{ij}\nabla_k X_i
X_j - 2(n-1)\langle\nabla\nabla R,\nabla X\rangle \\
&+ R_{ij}\nabla_kX_i\nabla_kX_j.
\end{split}
\end{equation}
Since the evolution equation for $Z$ is independent of the choice of coordinates,
choose the coordinates at a point at which $g_{ij} = \delta_{ij}$ and the
Ricci tensor is diagonal at that point. By  (\ref{equation-vf}), we have 
\begin{equation}
\label{equation-1}
\langle \nabla R, \Box{X}\rangle +
\frac{R_{ij} X_i}{n-1}\, \Box{X_j} = [\nabla_j R +
\frac{R_{ij}X_i}{n-1}]\, \Box{X_j} = 0
\end{equation}
and  
\begin{equation}
\begin{split}
\label{equation-2}
&\, 2R_{ij}\nabla_kX_i\nabla_kX_j +
2\nabla_kR_{ij}\nabla_k X_iX_j + 2(n-1)\langle\nabla\nabla R,
\nabla X\rangle \\
&= 2(\nabla_kX_i\cdot\nabla_k(R_{ij}X_j) +
(n-1)\nabla_kX_i\nabla_k\nabla_iR) \\
&= 2\nabla_kX_i\nabla_k(R_{ij}X_j + (n-1)\nabla_i R) = 0. 
\end{split}
\end{equation}

Combining (\ref{equation-Z}), (\ref{equation-1}) and
(\ref{equation-2}) yield to the equation 
\begin{equation}
\begin{split}
\label{equation-Z1}
\Box{Z} &= 3RZ - R^3 +
\frac{1}{2}(R_{kijl}R_{kl} - R_{ij}^2)X_iX_j -
\frac{1}{2(n-1)}RR_{ij}X_iX_j \\
&- \frac{(n-1)(n-2)}{2}|\nabla R|^2 - (n-1)R_{ij}\nabla_iR \, X_j +
R_{ij}\nabla_kX_i\nabla_kX_j.
\end{split}
\end{equation}
We recall the following  basic identity which holds  for locally conformally flat manifolds 
$$
R_{kijl} = \frac{1}{n-2}(R_{kl}g_{ij} +
R_{ij}g_{kl} - R_{kj}g_{il} - R_{il}g_{kj} -
\frac{R\, (g_{kl}g_{ij} - g_{kj}g_{il})}{n-1}) \\
$$
If we contract this identity  by $R_{kl}$ we get at a point where $g_{ij}=\delta_{ij}$
and $R_{ij}$ is also diagonal
\begin{equation*}
\begin{split}
R_{kijl}R_{kl}  
&= \frac{1}{n-2}(|\ric|^2 \, \delta_{ij} + R_{ij}R \, \delta_{ij} -
R_{ij}^2\delta_{ij} - R_{ij}^2\delta_{ij} -
\frac{R}{n-1}(R\, \delta_{ij} - R_{ij})) \\
&= \frac{1}{n-2} \left ( \, |\ric|^2 + \frac{n}{n-1}RR_{ij} - 2R_{ij}^2 -
\frac{R^2}{n-1}\, \right )\, \delta_{ij}
\end{split}
\end{equation*}
and therefore
\begin{equation}
\label{eq-curv}
\frac{1}{2}(R_{kijl}R_{kl} - R_{ij}^2) =
\frac{1}{2(n-2)}\, \left ( |\ric|^2 + \frac{n}{n-1}RR_{ij} - nR_{ij}^2 -
\frac{R^2}{n-1} \right )\,\delta_{ij}.
\end{equation}
We also have $\nabla_i R = -\frac{1}{n-1}R_{ij}X_j$, hence 
\begin{equation}
\begin{split}
\label{eq-scalar}
|\nabla R|^2 &= g^{ij}\nabla_i R\nabla_j R =
\frac{1}{(n-1)^2}R_{ik}X_kR_{jl}X_l \\
&= \frac{1}{(n-1)^2}R_{ik}R_{il}X_kX_l\delta_{ik}\delta_{il} = \frac{1}{(n-1)^2}R_{ij}^2X_iX_j\delta_{ij}
\end{split}
\end{equation}
and
\begin{equation}
\label{eq-mix}
-(n-1)R_{ij}\nabla_iRX_j = R_{ik}X_kR_{ij}X_j =
R_{ij}^2X_iX_j\delta_{ij}.
\end{equation}
Combining (\ref{equation-Z1}), (\ref{eq-curv}) and (\ref{eq-mix})
yield to the equation 
\begin{equation}
\label{eq-Z2}
\Box{Z} = 3RZ - R^3 + A_{ij}X_iX_j\delta_{ij} +
R_{ij}\nabla_kX_i\nabla_kX_j.
\end{equation}
where 
$$A_{ij} = \frac{1}{n-2} \, \left ( \frac{|\ric|^2}{2} + \frac{RR_{ij}}{n-1} -
\frac{R^2}{2(n-1)} - \frac{n}{2(n-1)}R_{ij}^2 \right ) \, g_{ij}.$$ 
Direct computation gives 
\begin{equation}
\begin{split}
\label{eq-rest}
2RZ - R^3 &= 2R \left ( (n-1)\Delta R + \langle \nabla
R,X\rangle +
\frac{1}{2(n-1)}R_{ij}X_iX_j + R^2 \right ) - R^3 \\
&= 2(n-1)R\, \Delta R + 2R\, \langle\nabla R, X\rangle +
\frac{RR_{ij}}{n-1}X_iX_j + R^3.
\end{split}
\end{equation}
Also, after  taking the covariant derivative $\nabla_k$ of
(\ref{equation-vf}), we find that 
\begin{equation}\label{eq-der}
\nabla_k \nabla_i R + \frac{1}{n-1}\nabla_k R_{ij} X_j
+ \frac{1}{n-1}R_{ij}\nabla_k X_j = 0
\end{equation}
which gives
$$\nabla_k \nabla_i R = - \frac 1{n-1} \left (  \nabla_k R_{ij} X_j
+ R_{ij}\nabla_k X_j   \right ).$$
 
If we sum (\ref{eq-der}) over $i$, by the contracted Bianchi
identity $\nabla_i R_{ij} = \frac{1}{2}\nabla_j R$,  we get
$$\Delta R + \frac{1}{2(n-1)}\nabla_j R \, X_j +
\frac{1}{n-1}R_{ij}\nabla_iX_j = 0.$$
By (\ref{equation-vf}) and
the previous identity we have
$$2(n-1)R\Delta R = \frac{RR_{ij}}{n-1}X_iX_j - 2RR_{ij}\nabla_iX_j$$
which combined with (\ref{eq-rest}) yields
\begin{equation*}
\begin{split}
2RZ - R^3 &= \frac{RR_{ij}}{n-1}X_iX_j -
2RR_{ij}\nabla_iX_j - \frac{2RR_{ij}}{n-1}X_iX_j +
\frac{RR_{ij}}{n-1}X_iX_j + R^3 \\
&= R^3 - 2RR_{ij}\nabla_iX_j.
\end{split}
\end{equation*}
It follows from \eqref{eq-Z2} that
\begin{equation}
\label{eq-Z3}
\Box{Z} = RZ + A_{ij}X_iX_j\delta_{ij} +
(2RZ - R^3 + R_{ij}\nabla_kX_i\nabla_kX_j)
\end{equation}
where by the discussion above 
\begin{equation*}
I:=2RZ - R^3 +  R_{ij}\nabla_kX_i\nabla_kX_j = R^3 - 2RR_{ij}\nabla_i X_j +
R_{ij}\nabla_kX_i\nabla_kX_j. 
\end{equation*}
Hence,  at the chosen coordinates at a point 
where $g_{ij} = \delta_{ij}$ and $R_{ij}$ is diagonal, we have
\begin{equation}
\begin{split}\label{equation-rest1}
I &= R^3 - 2RR_{ij}\nabla_i X_j +
R_{ij}\nabla_kX_i\nabla_kX_j = 
\sum_{i}R_{ii} \, (R^2 g_{ii} - 2R\nabla_iX_i +  |\nabla_iX_i |^2) +  \sum_{i\neq k} R_{ii} |\nabla_kX_i |^2 \\
&= \sum_{i}R_{ii} \, (R\, g_{ii} - \nabla_iX_i)^2 + \sum_{i\neq k} R_{ii}\,|\nabla_kX_i |^2
=g^{kl}R_{ij}(R\, g_{ik} -
\nabla_iX_k)(R\, g_{jl} - \nabla_jX_l). 
\end{split}
\end{equation}
By combining \eqref{eq-Z3} with \eqref{equation-rest1} we readily conclude \eqref{eq-evolution-Z}. 
The matrix $A_{ij}$ is the same   that Chow defines by $(3.13)$ in \cite{Ch}).
In local coordinates $\{x_i\}$,  where $g_{ij} =\delta_{ij}$ and the Ricci tensor is
diagonal at a  point, we have 
\[ R_{ij} = \left( \begin{array}{ccc}
\lambda_1 & &  \\
& \ddots &   \\
& & \lambda_n\\
\end{array} \right)\] 
hence
\begin{equation}
\label{eq-A-chow}
A_{ij} = \left( \begin{array}{ccc}
\nu_1 & &  \\
& \ddots &   \\
& & \nu_n \\
\end{array} \right)
\end{equation}
where
$$\nu_i = \frac{1}{2(n-1)(n-2)}\sum_{k,l\neq i, k> l} (\lambda_k - \lambda_l)^2.$$
\end{proof}

We will now give the proof of Proposition \ref{prop-ricci}.

\begin{proof}[Proof of Proposition \ref{prop-ricci}]
Assume now that the solution \eqref{eqn-yf} is 
a steady soliton, namely it satisfies \eqref{eq-sol100}.
 Taking the divergence of the above equation,  tracing and then taking the Laplacian yields to (see \cite{Ch} for details) 
$$(n-1)\Delta R + \frac{1}{2}\langle\nabla R, \nabla f\rangle + R^2 = 0.$$
With our choice of $X$ in (\ref{equation-vf}) we have that
$Z(g,X) = 0$, 
if $g$ is a steady Yamabe soliton. Then form (\ref{eq-evolution-Z}) we find
$$A_{ij}\nabla_i\nabla_j f \equiv 0, \qquad \mbox{on} \,\,\, M.$$
Then (\ref{eq-A-chow}) implies that at every point $p\in M$ either all eigenvalues of Ricci tensor $\lambda_1 = \dots = \lambda_n = \lambda$ are the same, or there are two distinct eigenvalues $\lambda$ and $\mu$ with multiplicities $1$ and $n-1$ respectively. In the latter case, say $\nabla_1 f \neq 0$ and $\nabla_i  f = 0$  for $i = 2, \dots n$, then $\nabla f = |\nabla f|\, e_1$, with $e_1 = \frac{\nabla f}{|\nabla f|}$  an eigenvector of Ricci tensor  and $\lambda_2 = \dots =\lambda_n$. In either case, we conclude that $\nabla f$ is an eigenvector of $\ric$. Other properties of $\ric$ listed in the statement of Proposition \ref{prop-ricci} now easily follow.
\end{proof}

To conclude the proof of Theorem \ref{thm-unique} we need the following lemma.

\begin{lem}
\label{lem-properties}
Let $c$ be a regular value of $f$ and $\Sigma_c = \{f = c\}$. Then,
\begin{enumerate}[i.]
\item
The function $G = |\nabla f|^2$ and the scalar curvature $R$ are constant on $\Sigma_c$, that is, they are functions of $f$ only.
\item
The mean curvature $H$ of $\Sigma_c$ is constant.
\item
The sectional curvature of the induced metric on $\Sigma_c$ is constant.
\end{enumerate}
\end{lem}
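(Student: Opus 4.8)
The plan is to exploit the warped-product structure that is becoming available and the algebraic facts from Proposition~\ref{prop-ricci}. Fix a regular value $c$ and work in the coordinates \eqref{eq-met-new}, valid in a neighborhood of $\Sigma_c$ on which $G\neq 0$.

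\textbf{(i) $G$ and $R$ are functions of $f$ alone.} The identity \eqref{eq-G} gives $\nabla G = 2R\,\nabla f$, so $\nabla G$ is everywhere parallel to $\nabla f$; hence $G$ is constant on each connected level set of $f$, i.e. $G=G(f)$. From $Z(g,X)\equiv 0$ one has the trace identity $(n-1)\Delta R+\tfrac12\langle\nabla R,\nabla f\rangle+R^2=0$; more directly, the Yamabe identity \eqref{eq-yam-id} reads $(n-1)\nabla R=\ric(\nabla f,\cdot)$, and by Proposition~\ref{prop-ricci} $\nabla f$ is an eigenvector of $\ric$ with eigenvalue $\lambda$, so $(n-1)\nabla R=\lambda\,\nabla f$. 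Thus $\nabla R$ is also parallel to $\nabla f$, and $R=R(f)$ as well. (Here one should note $\nabla f\neq 0$ on the regular level set, which is where these coordinates live.)

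\textbf{(ii) $H$ is constant on $\Sigma_c$.} The unit normal to $\Sigma_c$ is $e_1=\nabla f/|\nabla f|=\nabla f/\sqrt{G}$. The second fundamental form is $\mathrm{II}(X,Y)=\langle\nabla_X e_1,Y\rangle$ for $X,Y$ tangent to $\Sigma_c$, and a standard computation gives $\mathrm{II}=\tfrac{1}{\sqrt G}\,\Hess f|_{T\Sigma_c}$. By the soliton equation \eqref{eq-sol100}, $\Hess f=R\,g$, so $\mathrm{II}=\tfrac{R}{\sqrt G}\,g|_{T\Sigma_c}$; in particular $\Sigma_c$ is totally umbilic and the mean curvature is $H=(n-1)R/\sqrt G=(n-1)R(f)/\sqrt{G(f)}$, which is a function of $f$ only, hence constant on $\Sigma_c$. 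This also shows $g_{ab}=g_{ab}(f,\theta)$ evolves in $f$ by the simple ODE $\partial_f g_{ab}=\tfrac{2R}{G}g_{ab}$, so that $g_{ab}(f,\theta)=\phi^2(f)\,h_{ab}(\theta)$ for a fixed metric $h$ on $\Sigma_c$; this is the step that will feed into deriving the form \eqref{eq-metric-form}.

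\textbf{(iii) The induced metric on $\Sigma_c$ has constant sectional curvature.} Use the Gauss equation together with part (ii) and Proposition~\ref{prop-ricci}. For tangent vectors $e_a,e_b$ ($a,b\ge 2$), Gauss gives $R^{\Sigma}_{abab}=R_{abab}+\mathrm{II}_{aa}\mathrm{II}_{bb}-\mathrm{II}_{ab}^2=R_{abab}+\tfrac{R^2}{G}(g_{aa}g_{bb}-g_{ab}^2)$. Since the ambient manifold is locally conformally flat, $R_{abab}$ is determined algebraically by the Ricci tensor via the identity already used in the proof of Lemma~\ref{lemma-evolution-Z}, and by Proposition~\ref{prop-ricci} the ambient Ricci eigenvalues in the directions $e_2,\dots,e_n$ are all equal to $\mu=\mu(f)$ (with $\lambda$ in the $e_1$ direction). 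Plugging the two eigenvalue structure into the conformally-flat curvature formula shows $R_{abab}$ is a fixed multiple of $(g_{aa}g_{bb}-g_{ab}^2)$ depending only on $\lambda,\mu,R$, all of which are functions of $f$; hence $R^\Sigma_{abab}=K(f)(g_{aa}g_{bb}-g_{ab}^2)$ with the same constant for every pair $a\neq b$, which is precisely the statement that the induced sectional curvature on $\Sigma_c$ is constant (Schur, since $\dim\Sigma_c=n-1\ge 2$ for $n\ge 3$).

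\textbf{Main obstacle.} The delicate point is (iii): one must carefully contract the locally conformally flat curvature identity with the two-eigenvalue Ricci tensor to extract $R_{abab}$, and then confirm via Gauss that the resulting sectional curvature of $\Sigma_c$ is genuinely independent of which $2$-plane in $T\Sigma_c$ one picks — i.e. that the ``warping'' has not introduced any $\theta$-dependence. Equivalently, this is where positivity of the sectional curvature (ensuring $R>0$, $\ric>0$, $f$ strictly convex, a single critical point, and $G>0$ away from it) is really being used, so the argument must be organized so that all these structural facts are in hand before invoking them.
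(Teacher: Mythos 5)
Parts (i) and (ii) of your argument coincide with the paper's proof and are fine: $\nabla G=2R\,\nabla f$ and $(n-1)\nabla R=\ric(\nabla f,\cdot)$ together with Proposition~\ref{prop-ricci} give $\nabla_a G=\nabla_a R=0$ tangentially, and umbilicity plus $H=(n-1)R/\sqrt G$ gives (ii).

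In part (iii), however, there is a genuine gap. After writing the Gauss equation and the conformally flat curvature identity, you assert that the tangential Ricci eigenvalue satisfies $\mu=\mu(f)$ (equivalently that $\lambda=R_{11}$ is constant on $\Sigma_c$). Proposition~\ref{prop-ricci} is a pointwise algebraic statement about the eigenvalue structure of $\ric$; it does not say that $\lambda$ and $\mu$ are constant along a level set. Part (i) only gives that the combination $R=\lambda+(n-1)\mu$ is constant on $\Sigma_c$, which by itself does not pin down $\lambda$ and $\mu$ separately, so the claim ``$R_{abab}$ depends only on $f$'' is exactly the point that still has to be proved; your ``main obstacle'' paragraph gestures at a different (and automatic) issue, namely independence of the choice of $2$-plane at a fixed point, which already follows from the two-eigenvalue structure. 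The paper closes the real gap as follows: working in the coordinates \eqref{eq-met-new} and using $R=R(f)$, $g_{1a}=0$ and $\nabla_a g_{11}=-\nabla_a G/G^2=0$, one computes $\Gamma^1_{a1}=0$ and hence $\nabla_a\nabla_1 R=\partial_f(\partial_{\theta_a}R)=0$; then differentiating the identity $(n-1)\,|\nabla f|\,\nabla_1 R=R_{11}$ (a consequence of \eqref{eq-yam-id} and Proposition~\ref{prop-ricci}) in the tangential direction $e_a$, and using $\nabla_a G=0$, yields $\nabla_a R_{11}=0$. Only then is $R_{abab}=\frac{R-2R_{11}}{(n-1)(n-2)}$ constant on $\Sigma_c$, and the Gauss equation together with the constancy of $H$ finishes (iii). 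Without an argument of this kind (or some substitute proving $\nabla_a\lambda=0$), your proof of (iii) is incomplete.
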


\begin{proof}
Let $\{e_1, e_2, \dots e_n\}$ be an orthonormal frame with $e_1 = \frac{\nabla f}{|\nabla f|}$ and $e_2, \dots e_n$ tangent to $\Sigma_c$. By (\ref{eq-G}) we have
\begin{equation}
\label{eq-G-ind}
\nabla_a G = 2R\, \nabla_a f = 0, \qquad  a = 2, \dots n,
\end{equation}
since $e_a, \,\,\, i = 2, \dots n$ are tangential directions to the level surfaces $\Sigma_c$ on which $f$ is constant. Furthermore, using (\ref{eq-yam-id})  and Proposition \ref{prop-ricci} we get
\begin{equation}
\label{eq-R-ind}
(n-1)\nabla_a R = \ric(\nabla f, e_a) = 0, \,\,\, a = 2, \dots, n.
\end{equation}
Observe that (\ref{eq-G-ind}) and (\ref{eq-R-ind}) prove part (i) of our Lemma.

The second fundamental form of the level surface $\Sigma_c$ is given by 
$$h_{ab} = \frac{f_{ab}}{\sqrt{G}} = \frac{R\, g_{ab}}{\sqrt{G}} = \frac{H\, g_{ab}}{n-1}$$
where $H = \frac{(n-1)R}{\sqrt{G}}$ is the mean curvature of hypersurface $\Sigma_c$.
By part (i),  both  G and H are constant on $\Sigma_c$ and therefore the mean curvature $H$ of $\Sigma_c$ is constant. This proves (ii).

It remains to show that (iii) holds. By the Gauss equation, the sectional curvatures of $(\Sigma_c, g_{ab})$ are given by
\begin{equation}
\label{eq-sect-surf}
R^{\Sigma_c}_{abab} = R_{abab} + h_{aa}h_{bb} - h_{ab}^2 = R_{abab} + \frac{H^2}{(n-1)^2}.
\end{equation}
Since $W_{ijkl} = 0$,  we get
\begin{equation}
\label{eq-weyl-zero}
R_{ijkl} = \frac{1}{n-2}(g_{ik} R_{jl} - g_{il}R_{jk} - g_{jk}R_{il} + g_{jl}R_{ik}) - \frac{R}{(n-1)(n-2)}(g_{ik}g_{jl} - g_{il}g_{jk}).
\end{equation}
Using (\ref{eq-weyl-zero}) and Proposition \ref{prop-ricci}  we obtain
\begin{equation}
\label{eq-sect}
R_{abab} = \frac{2}{n-2}R_{aa} - \frac{R}{(n-1)(n-2)} = \frac{R - 2R_{11}}{(n-1)(n-2)}.
\end{equation}
Our goal is to show that $\nabla_a R_{aa} = 0$, that is, $R_{aa}$ is constant on the level surface  $\Sigma_c$. This together with $R$ and $H$ being constant on $\Sigma_c$ will yield to the constancy of sectional curvatures of $\Sigma_c$. 

Recall that our metric $g$ can be expressed as
$g = \frac{1}{G(f)} df^2 + h_{ab}(f,\theta)d\theta^a\,d\theta^b,$
where $(f,\theta_2, \dots, \theta_n)$ are the local coordinates on our soliton and $(\theta_2, \dots, \theta_n)$ are the intrinsic coordinates for $\Sigma_c$. Performing the computation in local coordinates we find
$$
\nabla_a\nabla_1 R = \frac{\partial^2 R}{\partial \theta_a\partial f}  - \sum_{l}\Gamma_{a1}^l\nabla_l R 
= \frac{\partial^2 R}{\partial \theta_a\partial f}  - \Gamma_{a1}^1\nabla_1 R
$$
since $R = R(f)$. Furthermore, if we call $\theta_1 := f$, using that $g_{1a} = \frac{1}{G(f)}\delta_{1a}$, then
\begin{eqnarray*}
\Gamma_{a1}^1 &=& \frac{g^{1k}}{2}\, \left(\frac{\partial g_{ak}}{\partial f} + \frac{\partial g_{1k}}{\partial \theta_a} - \frac{\partial g_{a1}}{\partial \theta_k}\right) \\
 &=& \frac{g^{11}}{2}\, \left(\frac{\partial g_{a1}}{\partial f} + \frac{\partial g_{11}}{\partial \theta_a} - \frac{\partial g_{a1}}{\partial f}\right) \\
 &=& 0
 \end{eqnarray*} 
since $g_{a1} \equiv 0$ and $\nabla_a g_{11} = -\frac{\nabla_a G}{G^2} = 0$.
This implies
\begin{equation}
\label{eq-hess-R}
\nabla_a\nabla_1 R = \frac{\partial}{\partial f}\left(\frac{\partial R}{\partial\theta_a}\right) = 0.
\end{equation}
On the other hand, by (\ref{eq-yam-id}) we have
$$(n-1)|\nabla f|\nabla_1 R = R_{11}.$$
Differentiating this equality  in the direction of the vector $e_a$ and  using that $\nabla_a G = 0$, where $G = |\nabla f|^2$, yields to
$$(n-1)|\nabla f|\nabla_a\nabla_1 R = \nabla_a R_{11}.$$ 
Using (\ref{eq-hess-R}) we conclude that 
$$\nabla_a R_{11} = 0$$
that is, $R_{11} = \lambda$ is constant on $\Sigma_c$. Since $R$ and $R_{11}$ are constant on $\Sigma_c$, by (\ref{eq-sect}) it follows that $R_{abab}$ is constant on $\Sigma_c$.
Since  $H$ is also constant on $\Sigma_c$ by part (ii),  (\ref{eq-sect-surf})  immediately implies  that the sectional curvatures of $\Sigma_c$ are constant, which proves (iii).
\end{proof}

\medskip

{\em Yamabe Shrinkers and Expanders:}
We will indicate how one argues in the case of shrinkers and expanders that satisfy 
(\ref{eqn-ys}), for $\rho = 1$ and $\rho = -1$, respectively.
First of all, the same arguments as before yield to
$$\nabla G = 2R \, \nabla f, \qquad (n-1)\nabla R = \ric \, (\nabla f, \cdot)$$
with $G = |\nabla f|^2$.

To prove Proposition \ref{prop-ricci} for shrinkers and expanders we can proceed with exactly the same reasoning and calculation. In other words, we still define 
$$Z(g,X) = (n -1)\Delta R + \langle \nabla R, X\rangle + \frac{1}{2(n-1)}R_{ij} X^iX^j + R^2$$
and choose $X$ to be the vector field such that 
$$\nabla_i R + \frac{1}{n-1}R_{ij} X^j = 0.$$
In the case of  Yamabe shrinkers ($\rho = 1$) and Yamabe expanders ($\rho = -1$), satisfying $(R-\rho)\, g_{ij} = \nabla_i\nabla_j f$,  if $X = \nabla f$,
assuming that they become extinct at $T=0$,  we get
$$Z(g,X) = \frac{\rho}{(-t)}\,  R$$
and therefore, 
$$\frac{\partial Z}{\partial t} = \frac{\rho}{t^2}\,R - \frac{\rho}{t}\,((n-1)\Delta R + R^2),$$
If we plug all that in (\ref{eq-evolution-Z}), using (\ref{eqn-ys}) for $\rho = 1$,  we obtain
$$\frac{\rho}{t^2}\, R  - \frac{\rho}{t}\, R^2 = -\frac{\rho}{t}\,  R^2 + A_{ij}\nabla_i f\nabla_j f + \frac{\rho}{t^2}\,R$$
implying that
\begin{equation}
\label{eq-A10}
A_{ij}\nabla_i\nabla_j f = 0.
\end{equation}

In the case of  expanders ($\rho = -1$) we argue exactly the same  way as before. Since $(R+1)g_{ij} = \nabla_i\nabla_j f$, if we consider the ones with positive sectional curvature, $f$ is still strictly convex and has at most one critical point.  In the case of  Yamabe shrinkers ($\rho = -1$), that is,  $(R-1)g_{ij} = \nabla_i\nabla_j f$, even though $R > 0$, $f$ may not be convex so we need to argue slightly differently, as in \cite{CC}. Note that the set $\{q\,\, |\,\, \nabla f(q) = 0\}$ is of measure zero. The same argument as above, for  steady solitons, gives us that locally, our soliton is rotationally symmetric. In other words, whenever $|\nabla f|(p) \neq 0$ we prove rotational symmetry in the neighborhood of the level surface $\Sigma_{f(p)}$. This means that locally, our soliton has a warped product structure 
\begin{equation}
\label{eq-warping}
g = ds^2 + \psi^2(s) g_{S^{n-1}}.
\end{equation}
Look at a cross section $S^{n-1}$ of our manifold at a point $p$, in which neighborhood the manifold is rotationally symmetric and we have the warped product structure. Assume that cross section corresponds to $s = 0$. Then $s$ measures the distance from the cross section on both  sides from it and our metric is of form (\ref{eq-warping}) for $s\in (-a,b)$, for $a, b > 0$.  As long as the warping function is not zero we can extend the warping product structure. In other words, if $\psi(s_0) \neq 0$, then by the continuity the metric will have the warping product structure $ds^2 + \psi^2(s,\theta) g_{S^{n-1}}$ a little bit past  $s_0$. Since the set of critical points of $f$ is of measure zero, by using the arguments as above to prove the rotational symmetry in the neighborhood of level surfaces corresponding to regular values, we get that $\psi(s,\theta)$ is almost everywhere the function of $s$ only. Therefore by the smoothness of  the metric, $g$ has to be of the form (\ref{eq-warping}) everywhere as long as $\psi$ does not vanish. We can have three possible scenarios: 
\begin{enumerate}
\item[(i)]
$g$ has the form (\ref{eq-warping}) for all $s\in (-\infty,\infty)$ in which case our soliton splits off a line, and that contradicts the positivity of curvature. 
\item[(ii)]
$g$ has the form (\ref{eq-warping}) for all $s\in (-\infty, a)$ and $\psi(a) = 0$, or for all $s\in (-b,\infty)$ and $\psi(-b) = 0$, which corresponds to soliton having only one end and $f$ having exactly one critical point.
\item[(iii)]
$g$ has the form (\ref{eq-warping}) for all $s\in (-a,b)$ and $\psi(-a) = \psi(b) = 0$, which corresponds to having a compact Yamabe soliton and these have been discussed and classified in Proposition \ref{prop-compact}.
\end{enumerate}

\end{proof}

We will now give the proof of Proposition \ref{prop-compact}, where no geometric assumptions have been imposed.

\begin{proof}[Proof of Proposition \ref{prop-compact}]
Tracing the soliton equation yields
$$\Delta f = n \, (R - \rho).$$
Furthermore,
\begin{equation*}
\begin{split}
n^2\int_M (R - \rho)^2\, dV_g &= \int_M (\Delta f)^2\, dV_g = \\
&= \int_M \nabla_i\nabla_i f \cdot \nabla_j\nabla_j f \, dV_g = -\int_M \nabla_i f \cdot\nabla_i\nabla_j\nabla_j f.
\end{split}
\end{equation*}
Using the identity 
$$\nabla_i\nabla_j\nabla_j f = \nabla_j\nabla_i\nabla_j f - R_{ik}\nabla_k f,$$
we obtain, integrating by parts once again, that 
$$\int_M (\Delta f)^2\, dV_g = \int_M |\nabla^2 f|^2\, dV_g + \int_M \ric(\nabla f, \nabla f)\, dV_g.$$
By (\ref{eq-yam-id}), using the soliton equation and the trace of it over and over again,
\begin{eqnarray*}
n^2\int_M (R - \rho)^2\, dV_g &=& \int_M |\nabla^2 f|^2\, dV_g + (n-1) \int_M \langle \nabla R, \nabla f\rangle\, dV_g \\
&=& \int_M |\nabla^2 f|^2\, dV_g  - (n-1)\int_M (R-\rho)\cdot\nabla f \, dV_g \\
&=& \int_M |\nabla^2 f|^2\, dV_g  - n(n-1)\int_M (R-\rho)^2\, dV_g \\
&=& n\int_M (R-\rho)^2\, dV_g - n(n-1)\int_M (R-\rho)^2\, dV_g 
\end{eqnarray*}
implying that $R \equiv \rho$, finishing the proof of the Proposition.

\end{proof}

\begin{prop}
\label{prop-conf-flat}
All complete, noncompact rotationally symmetric   steady or expanding Yamabe solitons with positive  Ricci curvature are either  nonflat and globally conformally equivalent to $\mathbb{R}^n$, or flat. In the case of shrinkers, they are either flat, or locally isometric to cylinders or nonflat and globally conformally equivalent to $\mathbb{R}^n$.
\end{prop}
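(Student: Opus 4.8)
The plan is to use the warped-product structure that the rotational symmetry already gives us and then analyze when the warping function can vanish, closing up the manifold or not.

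\medskip

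First I would recall that by Theorem \ref{thm-unique} (and the discussion in \S\ref{sec-rs} that produced the warped-product form), a complete noncompact rotationally symmetric Yamabe soliton with positive Ricci curvature can be written as $g = ds^2 + \psi^2(s)\, g_{S^{n-1}}$ on an interval of $s$-values. The potential function $f$ is a function of $s$ alone; in the steady and expanding cases $f$ is strictly convex (since $R>0$, resp. $R+1>0$) and hence has at most one critical point, so the soliton has exactly one end and the interval is $s \in [0,\infty)$ with $\psi(0)=0$, $\psi'(0)=1$, and $\psi(s)>0$ for $s>0$ (or else it splits off a line, contradicting $\ric>0$). For shrinkers one argues as in the last part of the proof of Theorem \ref{thm-unique}: $f$ need not be convex, but the set of critical points has measure zero, so one still gets the warped-product form wherever $\psi \neq 0$, and the global topology falls into the three scenarios listed there — splitting a line (excluded by $\ric>0$ unless the metric is flat, which is scenario (i) and must be handled as the flat case), one end with $\psi$ vanishing at one endpoint, or the compact case (excluded by noncompactness, or reduced to Proposition \ref{prop-compact}). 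The cylinder $\mathbb{R}\times S^{n-1}$ arises precisely in scenario (i) when $\psi$ is constant; this is why shrinkers get the extra "locally isometric to cylinders" alternative.

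\medskip

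Next, having a one-ended manifold with $\psi(0)=0$, $\psi'(0)=1$, I would invoke the result of Chen \cite{CH} cited right after Theorem \ref{thm-unique}: a rotationally symmetric complete metric with nonnegative sectional curvature of the warped-product form $ds^2 + \psi^2 g_{S^{n-1}}$ on $\mathbb{R}^n$ is globally conformally flat, i.e. there is a global conformal diffeomorphism to a domain in $\mathbb{R}^n$, and in fact (since the metric is complete and defined on all of $\mathbb{R}^n$) to all of $\mathbb{R}^n$. Here positive Ricci curvature together with local conformal flatness forces nonnegative (indeed positive) sectional curvature via the Weyl-vanishing identity \eqref{eq-weyl-zero}, so the hypothesis of \cite{CH} is met. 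Thus $g = u^{4/(n+2)}\, dx^2$ globally on $\mathbb{R}^n$ unless the metric happens to be flat (the degenerate case $\psi(s)=s$, giving Euclidean space), which is the "or flat" alternative in the statement. For shrinkers, after excluding the cylinder case, the same argument applies to the one-ended scenario (ii).

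\medskip

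The main obstacle I expect is the topological bookkeeping for shrinkers: unlike the steady and expanding cases, one cannot use convexity of $f$ to immediately conclude a single end, and one must carefully argue — exactly as in the shrinker portion of the proof of Theorem \ref{thm-unique} — that the warped-product structure extends smoothly as long as $\psi > 0$, that $\psi$ can vanish at most at the two ends of the maximal interval, and that the three scenarios (splitting a line, one end, compact) are exhaustive; then one must recognize that the "splitting a line" scenario with $\ric > 0$ forces $\psi$ constant (a flat cylinder metric, since a nonconstant $\psi$ on all of $\mathbb{R}$ with $\psi>0$ and complete would contradict completeness or positivity of curvature in the warped-product curvature formulas), which is the locally-cylindrical alternative. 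A secondary, more routine point is checking that the smooth closing-up at $\psi(0)=0$ really does produce a metric on $\mathbb{R}^n$ (smoothness at the pole requires $\psi'(0)=1$ and all even derivatives of $\psi$ vanishing there), which follows from smoothness of the soliton metric at its unique critical point of $f$.
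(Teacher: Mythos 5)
Your route is in essence the paper's: rotational symmetry gives local conformal flatness, and then everything is delegated to the rigidity theorem of \cite{CH}, with the ends/warped-product discussion for shrinkers borrowed from the proof of Theorem \ref{thm-unique}. The one step that fails as written is the assertion that positive Ricci curvature together with local conformal flatness ``forces nonnegative (indeed positive) sectional curvature via \eqref{eq-weyl-zero}''. It does not. From \eqref{eq-weyl-zero}, in an orthonormal frame diagonalizing $\ric$, the sectional curvatures are $K(e_i,e_j)=\frac{R_{ii}+R_{jj}}{n-2}-\frac{R}{(n-1)(n-2)}$, and this can be negative while $\ric>0$: for $n=4$ and Ricci eigenvalues $(\epsilon,\epsilon,1,1)$ one gets $K(e_1,e_2)=\frac{2\epsilon-1}{3}<0$ for small $\epsilon>0$. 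Even granting the structure of Proposition \ref{prop-ricci} (eigenvalues $\lambda$ of multiplicity $1$ and $\mu$ of multiplicity $n-1$), the tangential sectional curvature is $\frac{(n-1)\mu-\lambda}{(n-1)(n-2)}$, which is not nonnegative merely because $\lambda,\mu>0$. So the hypothesis of the theorem you quote is never actually verified.

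The gap is, however, an artifact of misquoting \cite{CH} (Carron--Herzlich, not Chen). Their theorem assumes completeness, local conformal flatness and nonnegative \emph{Ricci} curvature, and concludes that the manifold is either flat, or locally isometric to a product of a sphere and a line, or globally conformally equivalent to $\R^n$ or to a spherical space form $S^n/\Gamma$; the sentence in the paper right after Theorem \ref{thm-unique} about ``nonnegative sectional curvature'' describes how the result is later applied to solitons, not the hypothesis of \cite{CH}. Quoted correctly, your hypotheses are met at once (positive Ricci is assumed), and the proof collapses to two exclusions, which is exactly the paper's argument: noncompactness rules out $S^n/\Gamma$, and positive Ricci rules out the cylinder in the steady and expanding cases, leaving precisely the alternatives in the statement, with the cylinder alternative simply retained for shrinkers. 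With this, most of your warped-product bookkeeping (one end, $\psi(0)=0$, $\psi'(0)=1$, smooth closing at the pole) becomes unnecessary; note also that $\R\times S^{n-1}$ is not flat, so ``flat cylinder metric'' is a slip.
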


\begin{proof}
It is known that every rotationally symmetric metric is locally conformally flat.
In \cite{CH} it has been showed that all complete, locally conformally flat manifolds of dimension $n \ge 3$ with nonnegative Ricci curvature enjoy nice rigidity properties: they are either flat, or locally isometric to a product of a sphere and a line, or are globally conformally equivalent to $R^n$ or to a spherical spaceform $S^n/\Gamma$.  The second case contradicts our assumption on positive curvature. Hence, the only possibility for complete, nonflat, locally conformally flat, steady Yamabe solitons is being globally conformally equivalent to the euclidean space.
\end{proof}

Proposition \ref{prop-conf-flat} and Theorem \ref{thm-unique} imply that for classifying gradient Yamabe solitons with positive sectional curvatures is enough to understand and classify the radial Yamabe soliton solutions of the form $g = u^{\frac{4}{n+2}} dx^2$. We will study rotationally symmetric, conformally flat gradient Yamabe solitons and their geometric properties in the next couple of sections.

\section{PDE formulation of Yamabe solitons}
\label{cf}

Our aim in this section is to prove Proposition \ref{prop-ell00}.  
We will assume that  the metric $g$ is globally conformally equivalent to $\R^n$ (we will call it conformally flat) and rotationally symmetric and that satisfies
\eqref{eqn-ys}. We may   express $g$  as  $$g = u(r)^{\frac{4}{n+2}}\, (dr^2 + r^2 g_{S^{n-1}})$$
where 
$(r, \theta_2, \dots, \theta_n)$  denote  spherical coordinates.

We choose next   cylindrical coordinates on $\mathbb{R}^n$ defining  $v(s)$ by 
\begin{equation}\label{eqn-cyl}
v(s)^{\frac{4}{n+2}} =r^2 u(r)^{\frac{4}{n+2}}, \qquad  r = e^s.
\end{equation}
Then
$g = v(s)^{\frac{4}{n+2}}\, ds_c^2$, 
where $ds_c = ds^2 + g_{S^{n-1}}$ is the cylindrical metric. Denote by $w$ the conformal
factor in cylindrical coordinates, namely
$w(s) = v(s)^{\frac{4}{n+2}}.$

We will use an index $1$ or $s$ to refer to the $s$ direction and indices $2, 3, \dots, n$ to refer to the spherical directions.  By \eqref{eqn-ys} we have 
\begin{equation}
\label{eq-sol12}
(R-\rho)  \, g_{ij} = \nabla_i\nabla_j f
\end{equation}
for a potential function $f$ which is  radially symmetric.
Using the formulas 
$$\nabla_i\nabla_j f = f_{ij} + \Gamma_{ij}^l \, f_l$$
and
$$\Gamma_{ij}^l = \frac{g^{kl}}2 \, \left ( \frac{\partial g_{ik}}{\partial{x_j}} +  \frac{\partial g_{jk}}{\partial{x_i}} -  \frac{\partial g_{ij}}{\partial{x_k}}  \right )$$
for a function $f=f(s)$ that only depends on $s$ we have 
$$\nabla_s\nabla_s f = f_{ss} - \Gamma_{ss}^s f_s \qquad \mbox{and} \qquad 
\nabla_i\nabla_i f = -\Gamma_{ii}^s f_s.$$
Since
$$\Gamma_{ss}^s = \frac{w_s}{2w}, \qquad \Gamma_{11}^s = - \frac{w_s}{2w}$$
we conclude that 
$$\nabla_s\nabla_s f =  f_{ss} - \frac{w_s\, f_s}{2w} 
\qquad \mbox{and} \qquad \nabla_1\nabla_1 f = \frac{w_s\,  f_s}{2w}.$$
The last two relations and the soliton equation (\ref{eq-sol12}) imply
\begin{equation}
\label{eq-relations}
f_{ss} - \frac{w_s\,  f_s}{2w} = (R-\rho) \, w \qquad \mbox{and} \qquad \frac{w_s\,  f_s}{2w} = (R-\rho)\, w.
\end{equation}
If we subtract the second equation from the first we get
$$f_{ss} - \frac{f_s w_s}{w} = 0.$$
This is equivalent to 
$\left(\frac{f_s}{w}\right)_s = 0$ (since $w > 0$)
which implies that 
\begin{equation}
\label{eq-nice}
\frac{f_s}{w} = C. 
\end{equation}
The scalar curvature $R$ of the metric  $g = w(s) \, (ds^2 + g_{S^{n-1}})$  is given by
\begin{equation}\label{eqn-R}
R = - \frac{4(n-1)}{n-2}\, w^{-\frac{n+2}4} \left (\, (w^{\frac{n-2}{4}})_{ss} - \frac{(n-2)^2}{4} w^{\frac{n-2}{4}} \right ). 
\end{equation}
The second equation in   (\ref{eq-relations}) and (\ref{eq-nice})  imply that
\begin{equation}\label{eqn-Rw}
w_s = \frac 2C \, ( R -\rho)\, w.
\end{equation}
Combining \eqref{eqn-R} and \eqref{eqn-Rw}  gives
$$
w_s = -   \frac{8(n-1)}{C(n-2)}\, w^{-\frac{n+2}4+1}  \left (\, (w^{\frac{n-2}{4}})_{ss} - \frac{(n-2)^2}{4} w^{\frac{n-2}{4}}  + \rho \, \frac{(n-2)}{4(n-1)} \,w^{\frac{n+2}4}  \right ). 
$$
Setting $\theta = \frac{C\, m}{2(n-1)}$ we conclude that  $w$ satisfies the equation
\begin{equation}\label{eqn-ww}
(w^{\frac{n-2}{4}})_{ss} + \theta  \,  (w^{\frac{n+2}4})_s   - \frac{(n-2)^2}{4} w^{\frac{n-2}{4}}  + \rho \, \frac{(n-2)}{4(n-1)}
 \,w^{\frac{n+2}4}=0. 
\end{equation}
To facilitate future references, we also remark  that  \eqref{eqn-ww} can be re-written as
\begin{equation}
\label{eq-ww2}
w_{ss} = \frac{(\alpha - 1)}{\alpha} \, \frac{w_s^2}{w} - (\alpha+1)\, \theta \, w_s w + \frac{4}{\alpha} \, w - \frac{\alpha+4}{\alpha}
 \, \rho \, w^2, 
\quad \alpha=\frac{4}{n-2}.
\end{equation}
We conclude from \eqref{eqn-ww}  that $g = v(s)^{\frac{4}{n+2}}\, ds_c^2 = w(s) \, \, ds_c^2$ 
is a Yamabe soliton   if and only if $v$ satisfies 
the  equation 
\begin{equation}
\label{eqn-vvv}
(v^{\frac {n-2}{n+2}})_{ss} + \theta  \, v_s - \frac{(n-2)^2}{4} v^{\frac {n-2}{n+2}} + \rho \, \frac{(n-2)}{4(n-1)}\, v =0.
\end{equation}
If we go back to Euclidean coordinates, i.e. we set
\begin{equation}\label{eqn-coor}
u^{\frac 4{n+2}}(r)   = e^{ - 2 \, s} v^{\frac 4{n+2}} (s), \qquad s=\log r
\end{equation}
then, after a direct calculation, we conclude that 
$u$ satisfies the elliptic equation
\begin{equation}\label{eqn-ell2222}
\Delta u^m  + \theta     \, x \cdot \nabla u  +  \frac 1{1-m} \, (2\theta  + \frac m{n-1}\, \rho)\,  u =0, \qquad
 m=\frac{n-2}{n+2}
\end{equation}
which can also be written as
\begin{equation}\label{eqn-ell2}
\frac{n-1}{m} \, \Delta u^m  + \beta     \, x \cdot \nabla u  +  \gamma \,  u =0
\end{equation} 
with
$$\beta = \frac{n-1}{m}\, \theta \quad \mbox{and} \quad  \gamma = \frac{2\beta +\rho}{1-m}.$$

Observe also, that if $g=u^{\frac 4{n+2}}\, dx^2$ is a radially symmetric smooth solution of 
equation \eqref{eqn-ell2}, then the above discussion (done backwards) implies that $g$
satisfies the Yamabe solition equation  \eqref{eqn-ys} with potential function $f$
defined in terms of $w$ by \eqref{eq-nice}. 

To finish the proof of Proposition \ref{prop-ell00}, we need to show the following:

\begin{claim}\label{claim-beta} If $g=u^{\frac 4{n+2}}\, dx^2$ defines a complete Yamabe gradient soliton,  then 
$\beta \geq  0$. In the case of a  noncompact Yamabe shrinker or expander, then $\beta >0$.  

\end{claim}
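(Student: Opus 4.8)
The plan is to read off the sign of $\beta$ from the geometry of the soliton, using the relation $\beta = \frac{n-1}{m}\,\theta$ with $\theta = \frac{Cm}{2(n-1)}$, so that $\sign\beta = \sign C$, where $C$ is the constant appearing in \eqref{eq-nice}, namely $f_s = C\,w$. Since $w>0$, the sign of $C$ is the sign of $f_s$, i.e. the direction in which the (radial) potential function $f$ is monotone along the cylindrical variable $s=\log r$. So everything reduces to showing that $f$ is monotone nondecreasing in $s$ (and strictly increasing in the noncompact shrinker/expander case).

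The first step is to relate $f_s$ to the second fundamental form data of the spheres $\{r=\text{const}\}$, or equivalently to invoke \eqref{eq-relations}: from $\frac{w_s f_s}{2w} = (R-\rho)\,w$ together with $f_s = Cw$ one gets $w_s = \frac{2}{C}(R-\rho)w$, so the sign of $C$ is also the sign of $(R-\rho)/w_s$. That still requires an independent input. The cleaner route, the one I would actually carry out, is to return to the Riemannian picture of Section~\ref{sec-rs}: on a complete noncompact soliton with positive sectional curvature, the analysis there (the warped-product structure \eqref{eq-warping}, with $\psi$ vanishing at exactly one point, scenario (ii)) shows $f$ has exactly one critical point, the "origin," and $f$ is proper with $|\nabla f|\to\infty$; moreover from $\nabla G = 2R\nabla f$ with $R>0$ one sees $G=|\nabla f|^2$ is increasing in the direction of increasing $f$, so $f$ increases as we move away from its minimum toward the end. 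Parametrizing the end by $r\to\infty$, i.e. $s\to+\infty$, gives $f_s>0$, hence $C>0$ and $\beta>0$. For a compact soliton, Proposition~\ref{prop-compact} forces $R\equiv\rho$, so $\nabla^2 f\equiv 0$ on a closed manifold, hence $f$ is constant, $f_s\equiv 0$, $C=0$, and $\beta=0$; this covers the borderline case and explains why $\beta$ can be $0$ only in the compact (sphere) case.

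A subtlety I want to handle carefully: a priori $f$ could be increasing toward one end and we must make sure the cylindrical coordinate $s=\log r$ is oriented so that $s\to+\infty$ corresponds to that end (the noncompact, nonflat conformally-flat solitons are globally conformal to $\R^n$ by Proposition~\ref{prop-conf-flat}, and have a single end $r\to\infty$, with the unique critical point of $f$ at $r=0$); once the end is correctly identified the sign is forced. One also needs to rule out $f_s$ changing sign, but this follows from $f$ having a single critical point together with $f$ being proper and convex-like along the end. I would also remark that the identity $w_s = \frac{2}{C}(R-\rho)w$ gives, in the steady case $\rho=0$, that $w_s$ and $R$ have the same sign as $C$; since on these solitons $w$ is (after the origin) increasing and $R>0$, consistency again yields $C>0$.

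The main obstacle is the orientation/properness bookkeeping at the end — i.e. cleanly transporting the "$f$ increases away from its unique critical point" fact from the intrinsic warped-product description to the statement $f_s\ge 0$ in the cylindrical coordinate $s$, and making sure the degenerate compact case ($\beta=0$) is consistently included. Once that is set up, the claim is immediate from $\beta = \frac{n-1}{2}\,C/(n-1)\cdot\frac{m}{m}$... more precisely from $\beta=\frac{n-1}{m}\theta=\tfrac12 C$ up to the positive constant, so $\sign\beta=\sign C=\sign f_s\ge 0$, with strict inequality exactly when $f$ is nonconstant, i.e. in the noncompact case.
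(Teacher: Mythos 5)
Your route is genuinely different from the paper's (which never touches the geometry of $f$: it deduces $R\ge 0$ for shrinkers and steadies from the Aronson--Benilan inequality applied to the associated fast-diffusion solution, reads off $R(0)=(1-m)\gamma=2\beta+\rho$ from \eqref{eqn-RRR}, quotes V\'azquez's Proposition 7.4 to get $\beta>0$ for shrinkers, and rules out $\beta\le 0$ for expanders by Herrero--Pierre uniqueness for \eqref{eqn-fde} with zero initial data). Your reduction $\beta=\tfrac12 C$ and $\operatorname{sign}\beta=\operatorname{sign}f_s$ is correct, and for steady solitons and expanders your argument does close: there $\nabla^2 f=(R-\rho)g>0$ under the standing positive-curvature hypothesis, so the radial, smooth $f$ (with $f_r(0)=0$) is strictly increasing in $r$, giving $C>0$. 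Note, however, that this leans on the curvature hypothesis of Proposition \ref{prop-ell00}, while the paper's PDE argument does not.

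The genuine gaps are in the shrinker case. First, for $\rho=1$ the Hessian is $(R-1)g$, which is not a priori positive, so $f$ need not be convex and its unique critical point need not be a minimum; your sentence ``$G$ is increasing in the direction of increasing $f$, so $f$ increases away from its minimum'' assumes exactly what has to be proved about the direction of monotonicity. (This step is repairable: if $C<0$ then $\nabla f$ points toward the origin, and $\langle\nabla G,\nabla f\rangle=2RG>0$ forces $G$ to increase as $r\downarrow 0$, contradicting $G=C^2r^2u^{4/(n+2)}\to 0$ at the origin; but this argument is not what you wrote, and it still needs $R>0$.) Second, and more seriously, your claim of strictness --- ``$\beta>0$ exactly when $f$ is nonconstant, i.e.\ in the noncompact case'' --- is unproved and, as an equivalence, false: the flat steady soliton is complete, noncompact, with $f$ constant and $\beta=0$. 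What actually has to be excluded for a complete noncompact shrinker is $\beta=C=0$, i.e.\ $f$ constant and $R\equiv 1$; these are precisely the sphere solutions \eqref{eqn-shr}, which are smooth global solutions of \eqref{eqn-ell00} but incomplete as metrics on $\R^n$, so ruling them out requires an argument using completeness (the paper delegates this to Proposition 7.4 of \cite{Vaz1}), and your proposal supplies none. Your monotonicity scheme gives no leverage here because with $C=0$ there is nothing to be monotone.
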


\begin{proof} 
We have seen in Remark \ref{rem-par2} that a Yamabe soliton $g=u^{\frac 4{n+2}}\, dx^2$
defines a solution $\bar g=\bar u^{\frac 4{n+2}}\, dx^2$ of the Yamabe flow \eqref{eqn-yf},
or equivalently, of  the fast diffusion equation \eqref{eqn-fde}. Hence, if  $g$ is a Yamabe shrinker or a steady
soliton, then the scalar curvature $\bar R$ of $\bar g$ satisfies $\bar R \geq 0$ (this can be seen by using Aronson-Benilan inequality). It follows that  
the scalar curvature $R= - \frac{4\, (n-1)}{n-2} \,  {\Delta u^m}/u$  
  of the metric $g$ satisfies $R \geq 0$ as well. 
Equation 
\eqref{eqn-ell2} implies that 
\begin{equation}\label{eqn-RRR}
R = (1-m) \, (\gamma +\beta r\, (\log u)_r )
=(2\beta +\rho) + (1-m)\beta r\, (\log u)_r
\end{equation}
since $1-m=4/(n+2)$ and $\gamma\, (1-m)=2\beta +\rho$. 
Hence, 
$R(0) = (1-m) \, \gamma = 2\beta +\rho$. We conclude that  $\gamma \geq 0$ on a Yamabe shrinker or
a steady soliton (since $\rho=1$ or  $\rho=0$ respectively). 

In the case of a Yamabe shrinker, it is shown in Proposition 7.4 in \cite{Vaz1} that 
$\gamma >  \frac 1{1-m}$, or equivalently  $\beta >  0$,  (otherwise \eqref{eqn-ell2} does not admit a smooth global solution). This, in particular,  implies that on a Yamabe shrinker $R(0) > 1$. 

In the case of a Yamabe steady soliton, $\gamma \geq 0$ implies that $\beta \geq 0$ as well.
If $\gamma =0$, then $\beta =0$ and it follows from \eqref{eq-nice} that $w$ and hence $u$
is constant (remember that $C=\frac{2(n-1)}{m}\, \theta$ in \eqref{eq-nice})
and $\theta = \frac{m}{n-1}\beta$). In this case, $u$ defines the flat metric.

It remains the prove the claim for the Yamabe expanders.  To this end, we observe first that if there were 
a smooth solution of \eqref{eqn-ell2} with $\beta \leq 0$ (which implies that $\gamma \leq -\frac \rho 2 <0$ as well)
then, $\bar u(x,t) := t^{-\gamma} u(x\, t^{-\beta})$ would be a solution of \eqref{eqn-fde}
with initial data identically equal to zero. The uniqueness result in \cite{HP} would imply
that $\bar u \equiv 0$. Hence, $\beta > 0$. 

\end{proof}
\section{Classification of  radially symmetric Yamabe solitons}

In this section we will discuss the existence of radially symmetric  and conformally flat  Yamabe solitons.  We have seen in the previous section that this is equivalent to having 
a global solution  of equation \eqref{eqn-ell00}. We will then discuss the proof of Proposition \ref{prop-cys}.
Before we proceed with its proof we give the following a priori bound on the scalar curvature $R$.

\begin{prop}\label{prop-scalar} Let $g=u(r)^{\frac 4{n+2}} dx^2$ be a rotationally symmetric Yamabe soliton
with scalar curvature $R$. 
(i) If $g$ is a  Yamabe shrinker, then    $R >1$  
as long as  $\beta > \frac{1}{n-2}$. (ii) If $g$ is a  Yamabe 
steady solition or a Yamabe expander, then    $R >0$ 
as long as  $\gamma >0$. (iii) If $g$ is a   Yamabe expander, then  $R <0$  
as long as  $\gamma <0$ and $\beta >0$.

\end{prop}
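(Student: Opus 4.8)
The plan is to reduce the statement to an ODE analysis in cylindrical coordinates and then run a continuity (``first–crossing'') argument emanating from the origin. Writing the soliton as $g=w(s)\,ds_c^2$ as in \S\ref{cf}, recall from \eqref{eqn-Rw} that $w_s=\tfrac1\beta(R-\rho)\,w$ (since $C=2\beta$, because $\theta=\frac{Cm}{2(n-1)}$ and $\beta=\frac{n-1}{m}\theta$), so that $R=\rho+\beta(\log w)_s$. Inserting this into \eqref{eqn-ww} and simplifying, a direct computation yields the first–order equation
\begin{equation}\label{eq-Rode}
R_s=-\frac{n-2}{4\beta}\,(R-\rho)^2+(n-2)\beta-\frac{\beta}{n-1}\,w\,R .
\end{equation}
Moreover, by \eqref{eqn-RRR} and smoothness of $u$ at the origin, $R(s)\to 2\beta+\rho$ as $s\to-\infty$ (the origin $r=0$). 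The whole argument will consist in evaluating \eqref{eq-Rode} at the threshold value of $R$ and comparing the sign of $R_s$ there with the direction in which $R$ is forced to move. Two preliminary remarks: on Yamabe shrinkers and steady solitons $R\ge 0$ (Claim \ref{claim-beta}, via Aronson--B\'enilan); and on any expander with $\beta>0$ one has $R>-1$, since $R\to 2\beta-1>-1$ at the origin and wherever $R=-1$ equation \eqref{eq-Rode} gives $R_s=(n-2)\beta+\tfrac{\beta}{n-1}w>0$, so $R$ cannot decrease through $-1$.

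For part (ii), let $g$ be a steady soliton or an expander with $\gamma>0$. Then $R\to 2\beta+\rho>0$ at the origin; if $R$ vanished somewhere, let $s_*$ be the first such point, so $R_s(s_*)\le 0$, while \eqref{eq-Rode} at $R=0$ gives $R_s(s_*)=(n-2)\tfrac{4\beta^2-\rho^2}{4\beta}$, which is strictly positive because $\gamma>0$ forces $2\beta>|\rho|$ (i.e.\ $\beta>0$ when $\rho=0$, $\beta>\tfrac12$ when $\rho=-1$); contradiction, so $R>0$. (In the steady case one may instead invoke the identity $(n-1)\Delta R+\tfrac12\langle\nabla R,\nabla f\rangle+R^2=0$ used in Proposition \ref{prop-ricci}: since $R\ge 0$ and, as $\gamma>0$ forces $\beta>0$, $g$ is not flat so $R\not\equiv 0$, the strong maximum principle for the operator $(n-1)\Delta+\tfrac12\langle\nabla\cdot,\nabla f\rangle$ gives $R>0$.) For part (iii), $g$ is an expander with $\gamma<0$, equivalently $0<\beta<\tfrac12$, so $R\to 2\beta-1<0$ at the origin; if $R$ became nonnegative, the first point $s_*$ with $R(s_*)=0$ would satisfy $R_s(s_*)\ge 0$, whereas \eqref{eq-Rode} gives $R_s(s_*)=(n-2)\tfrac{4\beta^2-1}{4\beta}<0$, a contradiction. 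Hence $R<0$.

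Part (i) is the delicate one. Here $\rho=1$ and $R\to 2\beta+1>1$ at the origin, so if $R$ reached the value $1$ we could take the first such $s_*$, with $R_s(s_*)\le 0$; but \eqref{eq-Rode} at $R=1$ gives
$$R_s(s_*)=\beta\Big[(n-2)-\frac{w(s_*)}{\,n-1\,}\Big],$$
which is positive --- yielding the contradiction --- \emph{provided} $w(s_*)<(n-1)(n-2)$. Thus the shrinker case reduces to the a priori bound $w<(n-1)(n-2)$ at the first crossing, and this is exactly where the hypothesis $\beta>\frac1{n-2}$ must enter. I expect this to be the main obstacle: the constant $w\equiv(n-1)(n-2)$, $R\equiv 1$, is the equilibrium of the $(R,w)$–system corresponding to the asymptotically cylindrical end of the cigar solutions, so the required bound is borderline. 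The plan to obtain it is to study the planar system consisting of \eqref{eq-Rode} together with $w_s=\tfrac1\beta(R-1)w$, whose orbit issues as $s\to-\infty$ from the saddle point $(R,w)=(1+2\beta,0)$, along which $R$ is initially decreasing and $w$ increasing, and to show --- using the signs of the vector field on the nullclines and the restriction $\beta>\frac1{n-2}$ --- that the orbit stays in $\{R>1\}$, equivalently that $w$ never reaches $(n-1)(n-2)$ before $R$ reaches $1$. A useful first step is that, differentiating \eqref{eq-Rode} and using $w_s>0$ on $\{R>1\}$, every critical point of $R$ in $\{R>1\}$ is a strict local maximum, which already forces $R$ to be monotone on $(-\infty,s_*)$; the remaining work is to turn this monotonicity into the desired control on $w$.
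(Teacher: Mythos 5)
Your reduction to cylindrical coordinates is sound: the first-order equation $R_s=-\frac{n-2}{4\beta}(R-\rho)^2+(n-2)\beta-\frac{\beta}{n-1}\,wR$ is correct (it is consistent with \eqref{eq-w-1000}; note that \eqref{eq-ww2} as printed has a typo in the coefficient of $\rho w^2$, so checking against \eqref{eq-w-1000} is the right move), and your first-crossing arguments for parts (ii) and (iii) are complete: at the threshold $R=0$ the $w$-dependent term drops out and the sign of $R_s$ is governed by $4\beta^2-\rho^2$, which your hypotheses control. For these two cases your route is genuinely different from, and more elementary than, the paper's, which instead integrates the elliptic equation \eqref{eqn-RR4} for $R$ over balls $B_r$ and uses the soliton relation \eqref{eqn-RRR} to produce the integral identity \eqref{eqn-RR8}, then runs the same first-crossing argument in $r$.

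The genuine gap is part (i), which is the delicate case and is not proven. At the threshold $R=1$ the quadratic term vanishes but $-\frac{\beta}{n-1}wR$ survives, so your argument needs the a priori bound $w(s_*)<(n-1)(n-2)$ at the first crossing; you acknowledge this but do not establish it, and in fact the hypothesis $\beta>\frac1{n-2}$ is never used in what you wrote. The bound is exactly borderline (on the Barenblatt/cigar solutions $w\uparrow(n-1)(n-2)$ while $R\downarrow 1$), so it cannot come from a soft estimate, and the sketched phase-plane plan is not carried out: your observation that critical points of $R$ in $\{R>1\}$ are strict maxima controls the monotonicity of $R$ but gives no upper bound on $w$ before a crossing, since $w$ is increasing there. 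The paper sidesteps this entirely: setting $\bar R=R-1$, integrating \eqref{eqn-RR5} over $B_r$ and substituting $\beta r v_r=(\bar R-2\beta)v$ (from \eqref{eqn-RRR}) gives \eqref{eqn-RR7}, namely $\bigl((n-1)\bar R_r+\beta r\bar R v\bigr)\,|\partial B_r|=[(n-2)\beta-1]\int_{B_r}\bar R\,v\,dx$; this is precisely where $\beta>\frac1{n-2}$ enters, and the first-crossing argument then closes with no pointwise bound on the conformal factor. To complete your proof you must either prove $w<(n-1)(n-2)$ up to the first crossing (e.g.\ via a Lyapunov-type argument for the $(R,w)$ system near its equilibrium $(1,(n-1)(n-2))$), or adopt the paper's integral identity for the shrinker case.
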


\begin{proof} Let $g=u^{\frac 4{n+2}} dx^2$ 
be  a Yamabe soliton which satisfies \eqref{eqn-ell2} and set $v=u^\frac 4{n+2}$. It easily follows (\cite{Ch}) that the scalar curvature of a Yamabe soliton (\ref{eqn-ys}) satisfies the following elliptic equation
$$(n-1)\Delta_g R + \frac{1}{2}\langle \nabla R, \nabla f\rangle_g + R(R-\rho) = 0.$$
In the case of a rotationally symmetric Yamabe soliton we have showed that $f_s = C w$, with $C = 2\beta$.
All these yield to
\begin{equation}\label{eqn-RR4}
(n-1)\, \Delta R + \beta \, x \cdot \nabla R \, v + R \, (R-\rho)\, v =0, \qquad \rho \in \{0, +1,-1 \},
\end{equation}
where the Laplacian and the gradient are taken with respect to the usual euclidean metric.

Assume first that $g$ is a Yamabe shrinker so that $\rho =1$ in \eqref{eqn-RR4} and set $ \bar R = R-1$
which satisfies
\begin{equation}\label{eqn-RR5}
(n-1)\, \Delta \bar R + (1-m) \, \beta \, x \cdot \nabla \bar R \, v + \bar R \, (\bar R+1)\, v =0.\end{equation}
Since $\bar R$ is a radial function, integrating \eqref{eqn-RR5} in a ball $B_r:=B_r(0)$  we obtain (after integration by parts)
\begin{equation}\label{eqn-RR6}
\begin{split}
 ((n-1) \bar R_r &+  \beta r \bar R \, v) \, |\partial B_r|  \\&=  \beta  \int_{B_r}  r\, v_r \bar R \, dx  +   \int_{B_r}  \bar R \, [ \, n   \beta -1 -\bar R \, ]\, v \, dx.
 \end{split}
 \end{equation}
Equation \eqref{eqn-RRR} and equalities $v=u^{1-m}$ and $\gamma (1-m) = 2 \beta +1$ yield to
$$  \bar R = R-1 = 2\beta + \beta \, r \frac {v_r}{v}$$
which implies that $\beta r v_r = \bar R \, v - 2\beta v.$
Substituting  this into \eqref{eqn-RR6} gives 
\begin{equation}\label{eqn-RR7}
 ((n-1) \bar R_r +  \beta r \bar R \, v) \, |\partial B_r|   = [\, (n-2)  \beta  -1 ] \,   \int_{B_r}  \bar R \,  v \, dx.
\end{equation}
Since  $(n-2)\beta > 1$ we conclude that 
\begin{equation}\label{eqn-RR77}
(n-1) \bar R_r + \beta r \bar R \, v >0.
\end{equation}
We will now  show that $\bar R >0$. From \eqref{eqn-RRR} we have that $R(0)=2\beta +1 >1$,
since $\beta >0$. Hence $\bar R >0$ near $r=0$. Equation \eqref{eqn-RR77} now readily implies that $\bar R$
remains positive if $ \beta > 1/(n-2)$. 

Assume next that $g$ is a Yamabe expander so that $\rho =-1$ in \eqref{eqn-RR4}. Integrating equation
\eqref{eqn-RR5} as before,  we obtain that \eqref{eqn-RR6} holds for $R$ instead of $\bar R$. 
We recall that this time $(1-m)\, \gamma = 2\beta -1$. Hence, by \eqref{eqn-RRR} we have 
$\beta r v_r = (R+1-2\beta)\, v$. Substituting  this into \eqref{eqn-RR6} (for $R$ instead of $\bar R$) yields
\begin{equation}\label{eqn-RR8}
\begin{split}
 ((n-1) R_r +  \beta r R \, v) \, |\partial B_r|  =   
 \int_{B_r}   (n-2) \beta \,  R\, v \, dx.
 \end{split}
 \end{equation}
If  $R(0) = \gamma (1-m) =2\beta -1>0$, then $R >0$ for $r$ sufficiently close to the origin.
It follows from   \eqref{eqn-RR8}
 that $R$ remains positive. If $R(0) = \gamma (1-m) <0$ and $\beta >0$, then $R <0$ for $r$ sufficiently close to the origin
 and \eqref{eqn-RR8} implies that $R$ remains negative.

On a Yamabe steady soliton we always have that $R \geq 0$. We remark that the above argument shows that
$R >0$ if $R(0) = (1-m)\gamma >0$ (which also follows from the strong maximum principle). 

\end{proof}

\begin{cor}\label{cor-RRR} Assume that $g= u(r)^{\frac 4{n+2}}  \, (dr^2 + r^2 g_{S^{n-1}})$ is a Yamabe 
 soliton  which is radially symmetric. 
Assume that  $ \beta > \frac 1{n-2}$ in the case of a Yamabe shrinker or $\gamma >0$ in the case of 
a Yamabe steady soliton or a Yamabe expander. Then $R$ is a decreasing function in $r$. 
\end{cor}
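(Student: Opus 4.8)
The plan is to show that $R_r < 0$ by combining the monotonicity-type inequalities established inside the proof of Proposition \ref{prop-scalar} with the sign information on $R$ itself. First I would recall the two key identities obtained there. For a Yamabe shrinker, \eqref{eqn-RR7} gives
$$
\bigl((n-1)\bar R_r + \beta r\,\bar R\,v\bigr)\,|\partial B_r| = [(n-2)\beta - 1]\int_{B_r}\bar R\,v\,dx,
$$
while for a Yamabe expander or steady soliton \eqref{eqn-RR8} gives
$$
\bigl((n-1)R_r + \beta r\,R\,v\bigr)\,|\partial B_r| = (n-2)\beta\int_{B_r}R\,v\,dx.
$$
The strategy is then: in each case the hypothesis forces the right-hand side to have a definite sign, and the term $\beta r R v$ (or $\beta r \bar R v$) on the left to have the \emph{opposite} sign, so that $R_r$ is pinned down.

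The second ingredient is Proposition \ref{prop-scalar} itself, which tells us the sign of $R$ (resp. $\bar R$). For the shrinker with $\beta > \tfrac1{n-2}$, Proposition \ref{prop-scalar}(i) gives $\bar R = R - 1 > 0$ on $\R^n$, hence $\int_{B_r}\bar R\,v\,dx > 0$ and, since $(n-2)\beta - 1 > 0$, the right-hand side of \eqref{eqn-RR7} is positive; meanwhile $\beta r\,\bar R\,v \geq 0$ on $B_r$ — wait, this term appears on the left, not under the integral, so I must be slightly more careful. Rewriting \eqref{eqn-RR7} as
$$
(n-1)\bar R_r\,|\partial B_r| = [(n-2)\beta - 1]\int_{B_r}\bar R\,v\,dx - \beta r\,\bar R\,v\,|\partial B_r|,
$$
the first term on the right is positive but the second is negative (as $\bar R, v > 0$), so the sign of $\bar R_r$ is not immediately clear from \eqref{eqn-RR7} alone. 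The fix is to go back one step to \eqref{eqn-RR6}: before the substitution $\beta r v_r = \bar R v - 2\beta v$ is made, integrating \eqref{eqn-RR5} over $B_r$ and using $\Delta \bar R = \tfrac1{r^{n-1}}(r^{n-1}\bar R_r)_r$ yields directly
$$
(n-1)r^{n-1}\bar R_r = -(1-m)\beta r^n \bar R v - \int_{B_r}\bar R(\bar R+1)\,\tfrac{v}{\omega_{n-1}}\cdots
$$
— i.e. an expression in which \emph{every} term on the right is manifestly negative once $\bar R > 0$. That gives $\bar R_r < 0$, hence $R_r < 0$, immediately.

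So the actual order of operations is: (1) start from the elliptic equation \eqref{eqn-RR4} (equivalently \eqref{eqn-RR5}) for $R$ (or $\bar R$), written in radial form $\tfrac1{r^{n-1}}(r^{n-1}R_r)_r\,(n-1) + \beta r R_r v + R(R-\rho)v = 0$; (2) multiply by $r^{n-1}$ and integrate from $0$ to $r$, noting $R_r(0) = 0$ by smoothness; (3) observe that $\int_0^r \beta t^n R_t v\,dt$ can be integrated by parts to convert it into lower-order terms in $R$ and $v$ (this reproduces the manipulations leading to \eqref{eqn-RR7} and \eqref{eqn-RR8}); (4) in each of the three regimes — shrinker with $\beta > \tfrac1{n-2}$ so $\bar R > 0$; steady soliton or expander with $\gamma > 0$ so $R > 0$ by Proposition \ref{prop-scalar}(ii) — conclude that the resulting expression for $(n-1)r^{n-1}R_r$ is a sum of terms each with a fixed (negative) sign, whence $R_r < 0$ for all $r > 0$. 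The one case needing the expander sign convention $\rho = -1$: there $R(R-\rho) = R(R+1) > 0$ when $R > 0$, so the integrand term still contributes with the right sign; and for the shrinker $\bar R(\bar R + 1) > 0$ when $\bar R > 0$, likewise.

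The main obstacle I anticipate is bookkeeping rather than conceptual: one must check that after the integration by parts every surviving term really does carry the same sign, and in particular that the coefficient condition ($\beta > \tfrac1{n-2}$ in the shrinker case, $\gamma > 0$ otherwise) is exactly what is needed to kill the one term whose sign would otherwise be ambiguous — this is precisely the role those hypotheses played in Proposition \ref{prop-scalar}, so the same computation applies verbatim, just read as a statement about $R_r$ instead of about the sign of $R$ at the "last" point. A clean way to present it is to note that \eqref{eqn-RR7} and \eqref{eqn-RR8} are *pointwise in $r$* identities, so having already shown $R > 0$ (resp. $\bar R > 0$) everywhere, one substitutes this back into the same identities and reads off $(n-1)R_r\,|\partial B_r| = (n-2)\beta\int_{B_r}Rv\,dx - \beta r R v\,|\partial B_r|$ — then bounds $\beta r R v\,|\partial B_r| = n\beta \int_{B_r} R v\,dx\cdot(\text{avg correction})$... actually the cleanest route avoids this comparison entirely by using the pre-substitution form \eqref{eqn-RR6}, so I would present the argument starting from \eqref{eqn-RR5} integrated over $B_r$ and keeping the $\int_{B_r} r v_r \bar R$ term untouched, since $v_r < 0$ for these solitons (the conformal factor decays), making that term negative as well. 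I would double-check the sign of $v_r$ using $R \geq 0$ and \eqref{eqn-RRR}: $R = (2\beta+\rho) + (1-m)\beta r(\log u)_r$ combined with $v = u^{1-m}$ shows $r v_r/v = (1-m) r (\log u)_r = (R - 2\beta - \rho)/\beta$, which is $\leq (R-\rho)/\beta$ minus a constant — the monotonicity of $R$ we are proving and its boundedness should close this, but if circularity threatens, one instead uses that completeness of $g$ forces $u$, hence $v$, to decay, giving $v_r < 0$ directly.
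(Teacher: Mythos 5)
There is a genuine gap: the integral identities you build on cannot give the monotonicity, and the key display in your proposal is miscomputed. When you integrate \eqref{eqn-RR5} over $B_r$, the drift term is $(1-m)\beta\int_{B_r} r\,\bar R_r\, v\,dx$, whose sign is exactly the unknown; if instead you integrate it by parts you get $(1-m)\beta\bigl(r\bar R v\,|\partial B_r| - \int_{B_r}\bar R\,(nv + r v_r)\,dx\bigr)$, which contains the \emph{positive} bulk term $n(1-m)\beta\int_{B_r}\bar R v\,dx$ --- this is precisely where the $[\,n\beta-1-\bar R\,]$ term of \eqref{eqn-RR6} comes from. Your displayed identity ``$(n-1)r^{n-1}\bar R_r = -(1-m)\beta r^n\bar R v - \int_{B_r}\bar R(\bar R+1)v\cdots$'' drops that positive bulk term, so the claim that every term on the right is manifestly negative is false. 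Your fallback (keep $\int_{B_r} r v_r\bar R\,dx$ and use $v_r<0$) still leaves $\int_{B_r}\bar R\,[\,n\beta-1-\bar R\,]\,v\,dx$ uncontrolled, and this term is in fact positive for large $r$ (e.g.\ for the Barenblatt shrinkers $R\to 1$, so $\bar R\to 0$ while $n\beta-1>0$); your own text acknowledges the circularity here and does not resolve it. Note also that \eqref{eqn-RR7}--\eqref{eqn-RR8} only give the lower bound $(n-1)R_r> -\beta r R v$, i.e.\ an inequality in the wrong direction: those identities were designed to propagate the sign of $R$, not its monotonicity.

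The paper's proof is much simpler and pointwise, and you never consider it: by Proposition \ref{prop-scalar} the hypotheses give $R\,(R-\rho)>0$ everywhere, so at any interior local minimum of $R$ one would have $\nabla R=0$ and $\Delta R\ge 0$, making the left side of \eqref{eqn-RR4} strictly positive --- a contradiction. Hence $R$ has no interior local minimum (including at the origin, where $R_r(0)=0$), and a smooth radial function on $\R^n$ with no interior local minimum must be decreasing in $r$. If you want to salvage an integral-identity route you would need an additional mechanism (e.g.\ a first-crossing argument at the first zero of $R_r$), but as written the proposal does not establish $R_r\le 0$.
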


\begin{proof}  The function $R$ satisfies the elliptic equation \eqref{eqn-RR4} and by our assumptions 
and Proposition \ref{prop-scalar}, we have that $R\, (R - \rho) >0$ everywhere. 
Since $u$ is  strictly positive  equation  \eqref{eqn-RR4} implies that $R$ cannot achieve a local
minimum at a point $x \in \R^n$. Since $R$ is a radial function and $R(0) >0$  it follows that $R$
must be a decreasing function of $R$.

\end{proof}

\begin{proof}[Proof of Proposition \ref{prop-cys}] We will separate the cases $\rho =0$ 
(steady solitions), $\rho = 1$ (shrinkers) and  $\rho = -1$ (expanders).  

{\em Yamabe shrinkers $\rho =1$:} In this case the result is proven in Proposition 7.4 in \cite{Vaz1}
(see also \cite{GP}). We only need to  remark that $u$ solves \eqref{eqn-ell00} if and only if
$\bar u = (T-t)^\gamma \, u(x\, (T-t)^\beta)$ is an ancient  self-similar solution of the fast diffusion equation
\eqref{eqn-fde}.

\smallskip

{\em Yamabe expanders $\rho =-1$:} We look for a smooth  global radially symmetric solution of the elliptic equation 
\begin{equation}
\label{eqn-exp}
\frac{n-1}m\, \Delta u^m + \beta    \, x \cdot \nabla u  +  \gamma \,  u =0, \qquad \mbox{on}\,\, \R^n
\end{equation}
with $\beta >0$ and $\gamma = \frac{2\beta-1}{1-m}$, $m=\frac{n-2}{n+2}$.  

It is  well known     (c.f. in \cite{Vaz1}, Section 3.2.2)   that for any $\lambda >0$, equation \eqref{eqn-exp} 
admits a unique smooth radial solution $u=u_\lambda(r)$, with $u_\lambda (0) =\lambda$ 
and which is defined in a neighborhood of the origin. This follows via the change of variables
$$r=e^s, \qquad X(s)=\frac{r\, u_r}{u}, \qquad Y(r)=r^2\, u^{1-m}$$
which (in the radial case) transforms equation \eqref{eqn-exp} to an autonomous system for $X$ and $Y$. 

Hence we only need to show that such solution is globally defined. To this end it suffices to prove that $u$ remains positive
and bounded.  

We will first show that the $u$ remains positive for all $r >0$. This simply follows from
expressing \eqref{eqn-exp}  as
$$\mbox{div} \left (\frac{n-1}m \,  \nabla u^m  + \beta \, x \cdot  u \right )  = ( n \beta - \gamma) \, u $$
and observing that $( n \beta - \gamma) \, u \geq 0$,  in our case,  as long as $u \geq 0$. 
Integrating in a ball $B_r(0)$ gives  the differential inequality 
$\frac {n-1}{m}\, (u^m)_r + \beta  r \, u  \geq 0$, 
which easily  implies  the lower bound
$
u(r) \geq \left (  \mu + \frac{\beta \, (1-m)}{2 (n-1)} r^2    \right )^{-\frac 1{1-m}}$, 
with $\mu = u(0)^{m-1}.$ 

To establish the bound from above we argue as follows. If $\gamma >0$, then  $R >0$ by  Proposition \ref{prop-scalar}. Hence $\Delta u^m \leq 0$, which gives  $(r^{n-1} (u^m)_r)_r \le 0$, yielding to $u_r \le 0$ and therefore giving the upper bound on $u$. If $\gamma < 0$, then  $R <0$ by  Proposition \ref{prop-scalar}
By \eqref{eqn-RRR} we obtain  the inequality 
$(\log u)_r \le -\frac{\gamma}{\beta r}$ which implies the bound from above
$u(r) \le C \, r^{-\gamma/\beta}$.


\smallskip

{\em Yamabe steady solitions $\rho =0$:} We will show, for any given $\beta >0$,  the existence of an one parameter family 
of radial solutions $u_\lambda$, $\lambda >0$ of equation
\begin{equation}\label{eqn-ells}
\frac {n-1}{m} \, \Delta u^m + \beta  \, x \cdot \nabla u  +  \gamma  \, u =0
\quad \mbox{on}\,\,\, \R^n, \quad \gamma = \frac{2 \beta}{1-m}. 
\end{equation}
Notice, that $u$ solves \eqref{eqn-ells}
 if and only if
$\bar u = e^{-\gamma t} \, u(x\, e^{-\beta t})$ is an eternal  self-similar solution of the fast diffusion equation
\eqref{eqn-fde}. The existence of such solutions $\bar u$ (without a proof) is  noted in \cite{GP}. 
We only outline the proof,  avoiding the details  of standard well known  arguments. 

It follows from standard ODE arguments that for any $\lambda >0$, equation \eqref{eqn-ells} 
admits a unique smooth radial solution $u_\lambda$, with $u_\lambda (0) =\lambda$ 
and which is defined in a neighborhood of the origin. Hence we only need to show that such solution
is globally defined and satisfies the asymptotic behavior $u(r) \approx ( \frac{\log r}{r^2})^{\frac 1{1-m}}$,  as $r \to \infty$.

To this end, it is more convenient to work in cylindrical coordinates. Recall that if $v(s) = r^{\frac 2{1-m}}\, u(r)$,
$s=\log r$, then $v$ satisfies equation \eqref{eqn-vvv} with $\rho=0$ and $\theta >0$, namely   
\begin{equation}\label{eqn-vv}
 (v^m)_{ss} + \theta  \, v_s - \frac{(n-2)^2}{4} v^{m}  =0, \qquad m=\frac{n-2}{n+2}.
 \end{equation}
We assume that $v$ is defined on $-\infty < s \leq s_0$, for some $s_0 \in \R$. We will first observe that
$v_s \geq 0$ for all $s \leq s_0$. Indeed, since $v(-\infty)=0$ and $v >0$, it follows that $v_s >0$
near $-\infty$. To show that the inequality is preserved we argue that near a point  $s_1 < s_0$
at which $v_s(s_1)=0$, we  still have $v(s_1) >0$, hence by the above equation $(v^m)_{ss} >0$,
which implies that $(v^m)_s$ has to increase and hence it cannot vanish. 

Once we know that $v_s \geq 0$, equation \eqref{eqn-vv} implies that 
$(v^m)_{ss} \leq C v^{m}$, $C=\frac{(n-2)^2}{4}.$
Setting $h=(v^m)_s$ and considering $h$ as a function of $z=v^m$ we find that $h$ satisfies 
$h  h' \leq  C \, z$, or equivalently $f=h^2$ satisfies $f' \leq 2C\, z$. Since $f(0)=0$ (this corresponds
to $s=-\infty$) we conclude that  $f(z) \leq C\, z^2$, or 
$(v^m)_s \leq C \, v^m$, which readily implies that $v^m$ will remain bounded for all $s \in \R$. This
proves that for each $\lambda$  
the solution $u_\lambda$ is globally defined on $\R^n$.

It remains to show that $u(r)^{1-m} \approx r^{-2} \, \log r$,  as $r \to \infty$. This will be shown
separately in what follows.

\end{proof}

\begin{prop}\label{prop-asympt}
If $g_\mu= u_\mu^{\frac 4{n+2}}(r)  \, dx^2$
is a radially symmetric  steady  soliton as in Proposition \ref{prop-cys}, then 
$$u_\mu^{\frac 4{n+2}} (r) \approx  \frac{\log r}{r^2}, \qquad \mbox{as}\,\,\, r \to \infty.$$
\end{prop}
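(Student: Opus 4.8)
The plan is to work in cylindrical coordinates and analyze the asymptotic behavior of the function $v^m$ as $s\to\infty$, where $v(s)=r^{2/(1-m)}u(r)$, $s=\log r$, satisfies the autonomous ODE
\begin{equation*}
(v^m)_{ss} + \theta\,v_s - \tfrac{(n-2)^2}{4}v^m = 0, \qquad \theta>0.
\end{equation*}
We already know from the proof of Proposition \ref{prop-cys} that $v_s\geq 0$ and that $v^m$ stays bounded as $s\to\infty$; hence $v^m$ increases to a finite limit $L$. First I would show $L=0$ is impossible and in fact identify $L$: since $v^m$ is monotone and bounded, $(v^m)_s\to 0$ along a sequence, and plugging into the ODE forces $(v^m)_{ss}$ to converge to $\frac{(n-2)^2}{4}L$ along that sequence; monotone boundedness of $v^m$ and a standard Landau-type argument (or integrating the ODE) then give $(v^m)_s\to 0$ and $(v^m)_{ss}\to 0$, so $L=0$. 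Therefore $v^m\to 0$, i.e. $v\to 0$, as $s\to\infty$ — this is the statement that the soliton has an end at spatial infinity.

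Next, with $h=(v^m)_s$ regarded as a function of $z=v^m$ (valid since $v_s\geq 0$), the ODE becomes $h\,\frac{dh}{dz} + \theta\,\alpha^{-1}h\,z^{(1-m)/m}\cdot(\text{Jacobian factor}) - \tfrac{(n-2)^2}{4}z = 0$; more cleanly, writing the equation as $(v^m)_{ss}=\tfrac{(n-2)^2}{4}v^m-\theta v_s$ and using $v_s=\tfrac1m v^{1-m}(v^m)_s$, one gets a first-order ODE for $h(z)$ near $z=0$ whose leading balance, since $z^{(1-m)/m}\to 0$ faster is not automatic — here I need to check which of the two lower-order terms dominates. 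The expected answer $v^m\approx s$ (equivalently $u^{1-m}=v^{1-m}/r^2\approx (\log r)/r^2$) corresponds to $h=(v^m)_s\to$ const; so I would posit $(v^m)_s\to c_\infty$ for some constant and verify consistency: if $(v^m)_s\to c_\infty>0$ then $v^m\sim c_\infty s\to\infty$, contradicting boundedness. So instead $v^m$ has a finite limit — wait, this is the tension. Let me restate: the clean route is to \emph{not} assume boundedness a priori as $s\to+\infty$ but recall the soliton is complete; completeness at the end forces $v\to 0$, and then the precise rate $v^m\sim$ (const)$\cdot$(something) must be extracted.

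The actual mechanism I would use: set $p=(v^m)_s\geq 0$ and $z=v^m\to 0^+$. The ODE gives $p\,p' = \tfrac{(n-2)^2}{4}z - \theta\,\tfrac1m\,z^{1/m}\,p$ (using $v_s=\tfrac1m z^{(1-m)/m}p$ and $1-m=\tfrac{4}{n+2}$ so $(1-m)/m=\tfrac{4}{n-2}$, whence $z^{1+(1-m)/m}=z^{1/m}$). As $z\to 0$, the term $z^{1/m}p$ is lower order compared to $z$ (since $1/m>1$), so $p\,p'\approx \tfrac{(n-2)^2}{4}z$, giving $p^2\approx \tfrac{(n-2)^2}{4}z^2$, i.e. $p=(v^m)_s\approx \tfrac{n-2}{2}v^m$. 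But that would give exponential growth of $v^m$, contradicting what we need — so in fact the sign/regime must be that $v^m$ does \emph{not} tend to $0$; rather the solution exists only on $(-\infty,s_0]$ and we should track $r\to\infty$ differently, OR the relevant solutions are exactly those where the two terms balance. Given this subtlety, \textbf{the main obstacle} is pinning down the correct dominant balance in the ODE for $v$ near the end and converting it rigorously (via a barrier/ODE-comparison argument, constructing sub- and super-solutions of the form $v^m = a s + b$ and $v^m=a s + b\log s$) into the two-sided bound $c_1\,(\log r)/r^2 \le u^{1-m}\le c_2\,(\log r)/r^2$. Concretely I would: (1) guess $v^m\sim C\log$-type growth, i.e. look for solutions with $v_s/v\to 0$ but $v\to\infty$ slowly; (2) substitute $v^m=\varphi(s)$ and show $\varphi''+\theta\varphi^{1/m}\varphi' \cdot(\dots) = \tfrac{(n-2)^2}{4}\varphi$ forces, after dividing by $\varphi$ and using $\varphi'/\varphi\to 0$, the relation... and then (3) make this rigorous by the maximum principle applied to $v$ on large annuli, comparing with the explicit functions $\pm\big(\tfrac{\log r}{r^2}\big)^{1/(1-m)}$, exactly as one does for the porous-medium/fast-diffusion asymptotics in \cite{Vaz1}. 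I expect the completeness of the metric (infinite distance to the end) to be precisely the ingredient that selects the $\log r/r^2$ rate over the faster slow-decay rate $r^{-2/(1-m)}$ of the shrinker cigars.
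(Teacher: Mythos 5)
Your proposal has a genuine gap, and the preliminary reductions contain errors that derail it before the main estimate is ever reached. First, the boundedness you import from the proof of Proposition \ref{prop-cys} is only the absence of blow-up at finite $s$: the inequality $(v^m)_s\le C\,v^m$ permits exponential growth and says nothing about boundedness as $s\to\infty$. Indeed, since $v_s\ge 0$ and $v>0$, $v$ cannot tend to $0$ at $+\infty$, so your first paragraph (finite limit $L$, then $L=0$, then ``$v\to0$ at the end'') is self-contradictory; in fact $v\to\infty$ as $s\to\infty$. Relatedly, the phase-plane analysis you set up with $z=v^m\to 0^+$ describes the end $s\to-\infty$ (the origin $r\to 0$), not $r\to\infty$, which is why it produces the spurious exponential regime. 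The dictionary is also off: the target $u^{1-m}\approx(\log r)/r^2$ is equivalent to $v^{1-m}\approx s$, i.e.\ linear growth of the cylindrical conformal factor $w=v^{4/(n+2)}=r^2u^{4/(n+2)}$, and not to $v^m\approx s$ (these coincide only for $n=6$). Finally, the decisive two-sided bound is exactly the part you leave as a plan (sub/supersolutions ``of the form $as+b$'', a dominant balance you explicitly flag as unresolved), so no proof of the asymptotics is actually given.

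For comparison, the paper's argument works directly with $w$ and equation \eqref{eq-ww2} with $\rho=0$, and proves $cs\le w(s)\le Cs$: (i) the friction term $-(\alpha+1)\theta\,w_s w$ forces $w_s$ to remain bounded (if $w_s$ ever exceeds $4/(\alpha(\alpha+1)\theta)$, then $w_{ss}\le 0$ there), giving $w\le Cs$; (ii) $w\to\infty$ as $s\to\infty$, because a metric with $w\le C$, i.e.\ $u\le C\,r^{-2/(1-m)}$, would generate a Yamabe flow that becomes extinct in finite time, impossible for a steady (eternal) soliton --- this, rather than completeness alone, is what rules out the fast slow-decay rate; (iii) with $w\to\infty$ and $w_s$ bounded, the right-hand side of \eqref{eq-ww2} can never become negative (otherwise the quadratic inequality would force $2w_s\ge B\,w\to\infty$), hence $w_{ss}>0$, so $w_s$ is eventually bounded below by a positive constant, giving $w\ge cs$. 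Your heuristic balance $w_s\to 4/(\alpha(\alpha+1)\theta)$ is the right one, but to complete the proof you would need to carry out steps of this kind (or an honest barrier construction), which your write-up does not supply.
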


\begin{proof}
We will use cylindrical coordinates and  show that if $g_\mu= w(s) \, ds_c^2$ is a non-trivial steady Yamabe soliton,  then  $w(s)= O(s)$ as $s \to \infty$.
More precisely, we will show there exist constants $c, C > 0$ so that
\begin{equation}
\label{eq-bound-want}
c\, s \le w(s) \le C\, s, \qquad \mbox{as} \,\,\, s\to\infty.
\end{equation}
Recall that $w$ satisfies the equation  \eqref{eq-ww2} with $\rho=0$, namely
\begin{equation}
\label{eq-ww3}
w_{ss} = \frac{(\alpha - 1)}{\alpha} \, \frac{w_s^2}{w} - (\alpha+1)\, \theta \, w_s w + \frac{4}{\alpha} \, w, 
\quad \alpha=\frac{4}{n-2}.
\end{equation}

Assume first $\alpha:=4/(n-2) < 1$. We will first show the bound from above in (\ref{eq-bound-want}). It follows from
\eqref{eq-ww2} that
$$w_{ss} \le  (\alpha +1) \theta\,  w\, \left(\frac{4}{\alpha (\alpha +1) \theta} - w_s\right).$$
Assume there exists $s_0$ so that $w_s(s_0) \ge \frac{4}{\alpha (\alpha +1)\theta}$, since otherwise we are done. The above inequality then implies
$$w_s \le \max\left\{\frac{4}{\alpha (\alpha +1) \theta}, w_s(s_0) \right\}$$
giving us the upper bound in (\ref{eq-bound-want}).
For the lower bound, first observe that the $\lim_{s\to\infty} w = \infty$ 
(since solutions of the Yamabe flow with $w \leq C$, or equivalently $u \leq C\, r^{- \frac 2{1-m}}$,  vanish in finite time
which is impossible for a steady soliton). 
 If for some big $s$ the right hand side in (\ref{eq-ww2}) is negative, that is,
$$w_s^2 + \frac{\theta  \alpha (\alpha +1)}{1-\alpha} w_s w^2 - \frac{4}{\alpha(1-\alpha)} w^2 \ge 0$$
then setting  $A= \frac{\theta  \alpha (\alpha +1)}{1-\alpha}$ and $B=\frac{4}{\alpha(1-\alpha)}$,  we have (since $w_s \geq 0$) that 
$$2\, w_s \ge -A \,  w^2 + w^2 \sqrt{ A^2    + \frac {4 B}{ w^{2}}} \geq B\, w $$
when $w$ is very large (which is true always when $s\to\infty$). This is  impossible since
 we have just shown that $w_s$  remains bounded, as $s \to \infty$. We conclude that 
the right hand side in (\ref{eq-ww2}) is always positive that means $w_s$ is increasing. 

The case $\alpha > 1$  can be treated similarly as above. The case $\alpha =1$ is simpler. 
\end{proof}

We conclude this section by  showing the positivity of the sectional curvatures of the Yamabe 
solitons  found in Proposition \ref{prop-cys} in most of the cases.

\begin{prop}\label{prop-sec}
The logarithmic cigars and the  Yamabe expanders   found in Proposition \ref{prop-cys} 
have strictly  positive sectional curvatures as long as $\gamma >0$. The Yamabe shrinkers 
have strictly  positive sectional curvatures as long as $\beta > \frac{1}{n-2}$. 
\end{prop}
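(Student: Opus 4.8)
The plan is to work in cylindrical coordinates, where $g = w(s)\, ds_c^2 = w(s)\,(ds^2 + g_{S^{n-1}})$ and the sectional curvatures come in two types: the ``radial'' planes containing $\partial_s$ and the ``spherical'' planes tangent to the $S^{n-1}$ factor. For a conformally cylindrical metric of this warped shape, both curvatures can be written explicitly in terms of $w$, $w_s$, $w_{ss}$; using the soliton ODE \eqref{eq-ww2} (with the appropriate value of $\rho$) one can then eliminate $w_{ss}$ and reduce the positivity of each sectional curvature to the sign of an explicit expression in $w$ and $w_s$ only. So first I would record these two formulas. A convenient route is to go through the Ricci eigenvalues: by Proposition \ref{prop-ricci} (or the Weyl-vanishing identity \eqref{eq-weyl-zero}), $\ric$ has eigenvalues $R_{11}$ (radial) and $R_{22}=\dots=R_{nn}$ (spherical), and the two distinct sectional curvatures $K_{\mathrm{rad}}$, $K_{\mathrm{sph}}$ are linear combinations of these; conversely $R_{11}$ and $R_{22}$ are determined by $K_{\mathrm{rad}}$, $K_{\mathrm{sph}}$. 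Since $R = (n-1)R_{11}/\cdots$ type relations hold, and we already control $R$ via Proposition \ref{prop-scalar}, part of the work is half done.

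Next I would treat the three families in turn, using the sign information already established. For the \emph{logarithmic cigars} ($\rho=0$), we know $R>0$ when $\gamma>0$ (Proposition \ref{prop-scalar}(ii)) and $R$ is decreasing in $r$ (Corollary \ref{cor-RRR}), and we know the asymptotics $w \approx s$ from Proposition \ref{prop-asympt}; near the origin the metric is smooth and rotationally symmetric so it is asymptotically round, forcing both sectional curvatures positive there. The monotonicity of $R$ plus the ODE should let me show the ``bad'' sectional curvature (typically the radial one, $K_{\mathrm{rad}} \sim -w_{ss}/w^2 + \text{lower order}$, i.e. the sign of $w_s^2/w - $ something) stays positive: substituting \eqref{eq-ww3} shows $K_{\mathrm{rad}}$ and $K_{\mathrm{sph}}$ are, up to positive factors, $\frac{1}{\alpha}(\text{something involving } w_s^2/w^2, \theta w_s, 1)$, and one checks this never vanishes by a continuity/maximum-principle argument along $s$, exactly as in the proof of Proposition \ref{prop-asympt} where we showed the right-hand side of the ODE stays positive. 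For the \emph{expanders} ($\rho=-1$) with $\gamma>0$ we again have $R>0$ and $u_r\le 0$ (shown in the proof of Proposition \ref{prop-cys}), which controls one of the curvatures; the other follows the same ODE-sign argument. For the \emph{shrinkers} ($\rho=1$) with $\beta>\frac1{n-2}$, Proposition \ref{prop-scalar}(i) gives $R>1$ and Corollary \ref{cor-RRR} gives $R$ decreasing, and \eqref{eqn-RR77} gives the extra differential inequality $(n-1)\bar R_r + \beta r \bar R v>0$; I would feed these into the expressions for $K_{\mathrm{rad}}$ and $K_{\mathrm{sph}}$.

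Concretely, the key steps in order: (1) derive the two sectional curvature formulas for $g=w\,ds_c^2$ in terms of $w,w_s,w_{ss}$ (equivalently, express $R_{11}$ and $R_{22}=\cdots$ via \eqref{eq-weyl-zero} and the scalar curvature formula \eqref{eqn-R}); (2) use the soliton ODE to eliminate $w_{ss}$, obtaining $K_{\mathrm{rad}}=K_{\mathrm{rad}}(w,w_s)$, $K_{\mathrm{sph}}=K_{\mathrm{sph}}(w,w_s)$ with explicit coefficients depending on $\alpha,\theta,\rho$; (3) check positivity at the origin (asymptotic roundness of a smooth rotationally symmetric metric) and at $s=\infty$ (using the asymptotics of $w$); (4) rule out an interior zero of the borderline curvature by a first-order argument: at a hypothetical first zero, differentiate the curvature expression along $s$, use the ODE for $w$ and the monotonicity of $R$ (Corollary \ref{cor-RRR}) and, for shrinkers, the inequality \eqref{eqn-RR77}, to get a contradiction with it being a minimum. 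The main obstacle I anticipate is step (4): making the sign bookkeeping in the reduced expression for the radial sectional curvature clean enough that the threshold hypotheses ($\gamma>0$, resp. $\beta>\frac1{n-2}$) are exactly what is needed, and handling the dichotomy $\alpha<1$ versus $\alpha>1$ (i.e. $n>6$ versus $n<6$) which already forced a case split in Proposition \ref{prop-asympt} and will likely reappear here; the $\alpha=1$ ($n=6$) case should again be easiest.
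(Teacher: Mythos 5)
Your plan follows essentially the same route as the paper's proof: pass to the warped-product form of the metric, write the two sectional curvatures in terms of $w$ and its derivatives, use Proposition \ref{prop-scalar} and Corollary \ref{cor-RRR} (via $w_s=\tfrac1\beta (R-\rho)w$, which gives $w_s>0$ and shows the radial curvature is nonnegative precisely because $R_s\le 0$), use smoothness at the origin (the warping function satisfies $\psi_{\tilde s}\to 1$) for the spherical curvature, and then rule out an interior zero by evaluating the soliton ODE and its $s$-derivative at the interior minimum — exactly the paper's concluding Claim. The only superfluous element is your anticipated case split $\alpha<1$ versus $\alpha>1$, which is needed in Proposition \ref{prop-asympt} but not here.
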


\begin{proof}
Recall that if $w(s)$ is the conformal factor in cylindrical coordinates, we found in (\ref{eqn-Rw}) that
$$w_s = \frac{1}{\beta} (R - \rho) w$$
where $\rho = 0$ for the steady solitons and $\rho = 1$ for the shrinkers and $\rho=-1$ for the expanders.  The above equality and  Proposition \ref{prop-cys} imply that  $w_s >0$,
assuming that $\gamma >0$ always and that $\beta > \frac{1}{n-2}$ in the case of a Yamabe
shrinker.  Also, by Corollary  \ref{cor-RRR}, $R_s \leq 0$. We will show that
the last two inequalities imply that the sectional curvatures are nonnegative. 

To express the sectional curvatures in terms of $w$, we consider the   
the geodesic distance $\tilde{s}$ from the origin, that is,
$$d\tilde{s} = \sqrt{w(s)}\, ds \qquad \mbox{or} \qquad  \tilde{s}(s) = \int_{-\infty}^s \sqrt{w(u)}\, du.$$
Then our metric reads as
$g = d\tilde{s}^2 + \psi^2(\tilde{s}) \, g_{S^{n-1}},$
with $\psi^2(\tilde{s}) = w(s)$ and $\tilde{s} \in [0,\infty)$. Note that we have $\psi(0) = 0$. 
Differentiating  $\psi(\tilde{s})^2 = w(s)$ in $s$ yields to $$2\psi \psi_{\tilde{s}}\,  \sqrt{w} = w_s.$$ Since   $w_s > 0$, we have $\psi_{\tilde{s}} > 0$. 

Denote by  $K_0$ and $K_1$   the sectional curvatures of the $2$-planes perpendicular to the spheres $\{x\}\times S^{n-1}$ and the $2$-planes tangential to these spheres, respectively. These curvatures are given by 
$$K_0 = -\frac{\psi_{\tilde{s}\tilde{s}}}{\psi}  \qquad \mbox{and} \qquad  K_1 = \frac{1 - \psi_{\tilde{s}}^2}{\psi^2}.$$

We will first show that $K_0 \geq 0$, namely  that $- \psi_{\tilde{s}\tilde{s}} \geq 0$.
By direct calculation this is equivalent to $ - (\log w)_{ss} >  0$. By (\ref{eqn-Rw}), the last inequality is equivalent to 
$R_s \leq  0$ 
which follows from Corollary \ref{cor-RRR}.

We will now show that $K_1 \geq 0$. The inequality  $K_0 \geq 0$ implies  that $\psi_{\tilde{s}\tilde{s}} \le 0$. By Proposition 4.1. in \cite{AK}  and $\psi_{\tilde{s}} > 0$ we get
$$\lim_{\tilde{s}\to 0}\psi_{\tilde{s}} = 1.$$
Since $\psi_{\tilde{s}\tilde{s}} \le 0$, implying that $\psi_{\tilde{s}}$ decreases in $\tilde{s}$, we obtain that 
$$0 < \psi_{\tilde{s}} \le 1, \qquad \mbox{for all} \,\,\, \tilde{s} \in [0,\infty).$$
This shows that
$K_1 \ge 0.$

We will now show that both $K_0$ and $K_1$ are strictly positive. 

\begin{claim}
We have $K_0 >0$ and $K_1 >0$. 
\end{claim}

\begin{proof}[Proof of Claim]
We have observed above that $w_s > 0$. Recall that $w$ satisfies the equation \eqref{eq-ww2}, namely 
\begin{equation}
\label{eq-w-1000}
w w_{ss}  + \frac{n-6}{4}w_s^2 + \frac \theta m \, w^2 w_s + \frac{\rho}{n-1} w^3 - (n-2)w^2 = 0, 
\end{equation}
where $m=\frac{n-2}{n+2}$ and $\rho = 0, -1, 1$, in the case of the logarithmic cigars, expanders and shrinkers, respectively. 
We have just shown that $K_0 \ge 0$ and $K_1 \ge 0$, which  are equivalent to $w_s^2 - w w_{ss} \ge 0$ and $4w^2 - w_s^2 \ge 0$, respectively.  Assume that $K_0 = 0$ at an interior point $s$. Then, at that particular point, which is the interior minimum point for $K_0$, implying $(K_0)_s = 0$ at that point, we have
\begin{equation}\label{eqn-w16}
ww_{ss} = w_s^2, \qquad w w_{sss} = w_s w_{ss}=\frac{w_s^3}{w}. 
\end{equation}
Combining the first identity with equation \eqref{eq-w-1000}  yields to
\begin{equation}
\label{eq-int-id1}
\frac{n-2}{4} \, w_s^2 + \frac{\theta}m   w^2  w_s  + \frac{\rho}{n-1}w^3 - (n-2)w^2 = 0
\end{equation}
satisfied at the interior minimum of $K_0$. If we differentiate (\ref{eq-w-1000}) in $s$ and use \eqref{eqn-w16} 
to eliminate $w_{sss}$ and $w_{ss}$ we  obtain
\begin{equation}
\label{eq-int-id2}
\left (\frac{n-2}{4}\, w_s^2 + \frac{3 \theta}{2m} \, w^2 w_s   + \frac{3\rho}{2(n-1)} w^3 - (n-2)w^2 \right )\, w_s = 0
\end{equation}
holding at the interior minimum point for $K_0$. After dividing (\ref{eq-int-id2}) by $w_s$ and subtracting (\ref{eq-int-id1}) from (\ref{eq-int-id2}) we obtain
\begin{equation}
\label{eq-rel-100}
\frac{\rho}{n-1} w^3 + \frac{\theta}m \, w^2 w_s = 0.
\end{equation}
Since $w_s >0$ we  see that this  is impossible for $\rho \ge 0$. That shows the logarithmic cigars and the shrinkers for $n\beta \ge \gamma$ have $K_0 >0$.
 In the case of the expanders ($\rho = -1$) when $\gamma > 0$, subtracting   (\ref{eq-rel-100}) from (\ref{eq-int-id1}) 
 yields to 
$w_s=2w$, which combined with \eqref{eq-rel-100}  gives  $\theta = \frac{m}{2(n-1)}$, which is equivalent to $\beta = \frac 12$ in \eqref{eqn-ell2} and therefore $\gamma = 0$. This contradicts $\gamma > 0$.

Similarly, assume $K_1 = 0$ at some interior point $s$. Then $(K_1)_s = 0$ at that point, implying 
$$w_s = 2w, \qquad w_{ss} = 4w.$$
If we plug these back in (\ref{eq-w-1000}) we obtain
\begin{equation}
\label{eq-rel-200}
 \frac {2 \theta}m w^3 + \frac{\rho}{n-1} w^3 = 0
\end{equation}
which is impossible for $\rho \ge 0$, covering the logarithmic cigars ad the shrinkers. In the case of the expanders ($\rho = -1$), when $\gamma > 0$, identity (\ref{eq-rel-200}) implies again that 
$\theta = \frac{m}{2(n-1)}$, or $\beta = \frac 12$
and therefore $\gamma = 0$ which again contradicts $\gamma > 0$.

\end{proof}

The proof of the Proposition readily follows from the  above claim.

\end{proof}

\section{Eternal solutions to the Yamabe flow}

As a corollary of the proof of Theorem \ref{thm-unique} we have the following rigidity result for eternal solutions to the Yamabe flow, which can be viewed as the analogue of Hamilton's theorem for eternal solutions to the Ricci flow, and the proof adopts some of Hamilton's ideas to the Yamabe flow.

\begin{cor}
\label{cor-eternal}
Let $g(x,t)$ be a  complete eternal solution to the locally
conformally flat Yamabe flow on a simply connected manifold $M$, with uniformly bounded sectional curvature and
strictly positive Ricci curvature. If the  scalar curvature $R$ assumes
its maximum at an interior space-time point $P_0$,  then $g(x,t)$ is necessarily a gradient steady soliton.
\end{cor}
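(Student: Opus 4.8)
The plan is to adapt Hamilton's strong-maximum-principle argument for eternal solutions of the Ricci flow to the Harnack expression $Z(g,X)$ introduced earlier for the Yamabe flow. Recall that in Lemma \ref{lemma-evolution-Z} we showed that, along a complete eternal locally conformally flat Yamabe flow with $\ric>0$, the quantity
$$Z(g,X) = (n-1)\Delta R + \langle\nabla R, X\rangle + \frac{1}{2(n-1)}R_{ij}X_iX_j + R^2,$$
with $X$ the (well-defined, since $\ric>0$) vector field solving $\nabla_i R + \frac{1}{n-1}R_{ij}X_j = 0$, evolves by
$$\Box Z = RZ + A_{ij}X_iX_j + g^{kl}R_{ij}(Rg_{ik}-\nabla_i X_k)(Rg_{jl}-\nabla_j X_l),$$
where $\Box = \partial_t - (n-1)\Delta$ and $A_{ij}$ is Chow's matrix, which is nonnegative definite since $\ric\ge 0$ (indeed $\nu_i = \frac{1}{2(n-1)(n-2)}\sum_{k,l\neq i,k>l}(\lambda_k-\lambda_l)^2 \ge 0$). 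Thus all three terms on the right are nonnegative (the last being a sum $\sum_i R_{ii}(Rg_{ii}-\nabla_i X_i)^2 + \sum_{i\neq k}R_{ii}|\nabla_k X_i|^2 \ge 0$ by $\ric\ge 0$), so $\Box Z \ge RZ$, i.e. $Z$ is a supersolution of a linear parabolic equation. The first step is therefore to establish, via Chow's Harnack inequality for the Yamabe flow or directly from this evolution equation together with the maximum principle on the eternal solution, that $Z \ge 0$ everywhere.

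The second and decisive step is to feed in the hypothesis that $R$ attains an interior space-time maximum at $P_0$. One checks that at such a point the ODE-type lower bound for $R$ coming from the trace-Harnack forces $Z(P_0) = 0$: indeed the Harnack expression controls $\partial_t R$ from below, and at a space-time maximum $\partial_t R = 0$ and $\nabla R = 0$, which (together with $\Delta R \le 0$ there) pins $Z$ to its extreme value $0$. Since $Z \ge 0$ globally and $Z(P_0)=0$, the strong maximum principle applied to $\Box Z \ge RZ$ (using uniformly bounded curvature so the operator is uniformly parabolic and the usual Hopf-type argument applies, and simple connectedness to avoid topological obstructions) yields $Z \equiv 0$ on $M\times(-\infty,\infty)$.

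The third step is to extract the soliton structure from $Z\equiv 0$. Since $\Box Z \equiv 0$ and each term on the right of the evolution equation is nonnegative, they all vanish identically: in particular $A_{ij}X_iX_j \equiv 0$ and $g^{kl}R_{ij}(Rg_{ik}-\nabla_i X_k)(Rg_{jl}-\nabla_j X_l)\equiv 0$. Because $\ric>0$ strictly, the last identity forces $\nabla_i X_j = R\,g_{ij}$, i.e. the vector field $X$ satisfies $\nabla_i X_j = R\,g_{ij}$. It remains to see that $X$ is a gradient field; this follows because $\nabla_i X_j = R g_{ij}$ is symmetric, so $X^\flat$ is closed, and on the simply connected $M$ we may write $X = \nabla f$ for some smooth $f$, giving $\nabla_i\nabla_j f = R\,g_{ij}$, which is exactly the steady Yamabe soliton equation \eqref{eq-sol100}. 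One should also record that $X$ as defined by \eqref{equation-vf} is consistent with $X=\nabla f$, using the identity $(n-1)\nabla R = \ric(\nabla f,\cdot)$ from \eqref{eq-yam-id} — this is an internal consistency check rather than an obstacle.

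I expect the main obstacle to be the second step: rigorously showing that the interior space-time maximum of $R$ forces $Z$ to vanish there, and then justifying the strong maximum principle for $Z$ on a noncompact eternal solution. The maximum-principle part is handled by the uniformly bounded sectional curvature hypothesis, which makes $\Box Z \ge RZ$ a uniformly parabolic inequality with bounded coefficients, so Hamilton's strong maximum principle for tensors/functions on complete manifolds applies and propagates the zero of $Z$ backward and forward in time and across $M$; simple connectedness is then used only at the very end to integrate $X$ to a potential $f$. The conceptual heart, mirroring Hamilton's eternal-solutions theorem for Ricci flow, is that an interior maximum of the curvature quantity is incompatible with $Z\ge 0$ unless the Harnack quantity is everywhere degenerate, which is precisely the soliton condition.
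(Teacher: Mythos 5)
Your proposal is correct and follows essentially the same route as the paper: Chow's Harnack quantity $Z \ge 0$, the observation that at $P_0$ one has $X(P_0)=0$ (since $\nabla R=0$ and $\ric>0$) so that $Z(P_0)=(n-1)\Delta R+R^2=\partial_t R(P_0)=0$, a Hamilton-type strong maximum principle propagating $Z\equiv 0$ (the paper carries this out on the noncompact manifold by comparing $Z$ with an auxiliary solution of the linear heat equation and the barrier $\delta e^{At}f(x)$ from \cite{Ha2}, which is exactly the step you flag as the main obstacle), and finally the vanishing of the quadratic term forcing $\nabla_i X_j = R\,g_{ij}$, with simple connectedness giving $X=\nabla f$. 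The only small slip is your parenthetical appeal to $\Delta R\le 0$ at $P_0$: no sign condition on $\Delta R$ is needed, since $X(P_0)=0$ already reduces $Z(P_0)$ to $\partial_t R(P_0)$, which vanishes at the interior space-time maximum.
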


From (\ref{equation-vf}) we have that $X_j = -(n-1)R_{ij}^{-1}\nabla R_i$.
Since
$$R_t = (n-1)\, \Delta R + R^2$$
and since at the point $P_0=(x_0,t_0)$ where $R$ assumes its maximum, we have 
${\partial R}/{\partial t} =0$ and $\nabla_i R = 0$, we conclude that 
$$Z(g,X) = 0, \qquad \mbox{at} \,\, P_0.$$
The idea is to apply
the strong maximum principle to get that $Z \equiv 0$,  which implies
that $\nabla_i X_j = R\, g_{ij}$ (this will follow from the evolution equation for
$Z$). To simplify the notation, we define $\Box = \partial_t - (n-1)\, \Delta$. 

To finish the proof of Corollary \ref{cor-eternal} we need the
following version of the strong maximum principle. 

\begin{lem}
\label{lemma-zero-every}
If $Z(g,X) = 0$ at some point at $t = t_0$, then $Z(g,X) \equiv 0$
for all $t < t_0$.
\end{lem}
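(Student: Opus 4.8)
The plan is to apply the strong maximum principle for scalar parabolic operators to the evolution equation \eqref{eq-evolution-Z} established in Lemma \ref{lemma-evolution-Z}. First I would recall that Lemma \ref{lemma-evolution-Z} gives
\begin{equation*}
\Box Z = R\, Z + A_{ij}X_iX_j + g^{kl}R_{ij}(Rg_{ik} - \nabla_iX_k)(Rg_{jl} - \nabla_jX_l),
\end{equation*}
and observe that since the metric has strictly positive Ricci curvature, the last two terms on the right are \emph{nonnegative}: indeed $A_{ij}$ is diagonalized with entries $\nu_i = \frac{1}{2(n-1)(n-2)}\sum_{k,l\neq i,\, k>l}(\lambda_k-\lambda_l)^2 \geq 0$ by \eqref{eq-A-chow} (so $A_{ij}X_iX_j \geq 0$), and the tensor $g^{kl}R_{ij}(Rg_{ik}-\nabla_iX_k)(Rg_{jl}-\nabla_jX_l)$ is a quadratic form in the matrix $Rg_{ij}-\nabla_iX_j$ weighted by the positive-definite Ricci tensor, hence $\geq 0$ as well (this is exactly the computation \eqref{equation-rest1}). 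Therefore $Z$ satisfies a differential inequality of the form
\begin{equation*}
\Box Z \geq R\, Z, \qquad \text{i.e.} \qquad \partial_t Z - (n-1)\Delta Z - R\, Z \geq 0.
\end{equation*}

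Next I would invoke the strong maximum principle for supersolutions of a linear parabolic equation with bounded coefficients. Since the solution is eternal with uniformly bounded sectional curvature, $R$ and its derivatives are bounded, so the zeroth-order coefficient $R$ is bounded; thus the parabolic Harnack/strong maximum principle applies: if the supersolution $Z$ attains an interior minimum value at some point $(p_1,t_1)$, and $Z \geq 0$ nearby, then $Z$ is constant (equal to that minimum) on the whole connected component of the time-slab $\{t \leq t_1\}$. The remaining point is to establish that $Z \geq 0$ everywhere, or at least to set up the argument so that the vanishing at $P_0$ forces vanishing backwards in time. One clean way: note that at $P_0 = (x_0,t_0)$ we have $Z = 0$ (shown just before the lemma), and I would first argue — via the differential inequality $\Box Z \geq RZ$ together with the boundedness of the geometry and a maximum-principle argument on compact exhaustions — that $Z \geq 0$ on $M \times (-\infty, t_0]$; then the strong maximum principle, applied at the interior zero $P_0$, upgrades this to $Z \equiv 0$ for all $t \leq t_0$. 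Since $t_0$ can be taken to be the time-coordinate of $P_0$ and the statement only asks for $t < t_0$, this suffices; alternatively one runs the same argument with $t_0$ replaced by the time of any point where $Z$ vanishes.

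I expect the main obstacle to be the justification of the sign $Z \geq 0$ on the backward time-slab and the application of the strong maximum principle on a noncompact manifold: one must either exhibit a suitable barrier using the bounded geometry to rule out loss of the minimum at spatial infinity, or quote a version of the strong maximum principle valid for complete manifolds with bounded curvature (as Hamilton does in the Ricci flow setting). The positivity of the two error terms on the right side of \eqref{eq-evolution-Z} is the algebraic heart of the matter, but it is already in hand from Lemma \ref{lemma-evolution-Z}; the genuinely delicate part is the analytic one of applying the maximum principle in the complete, noncompact case, which is why the hypotheses of uniformly bounded sectional curvature and strictly positive Ricci curvature in Corollary \ref{cor-eternal} are essential.
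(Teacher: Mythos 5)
Your overall route is the same as the paper's: use Lemma \ref{lemma-evolution-Z} to get $\Box Z \ge RZ$ (the two extra terms in \eqref{eq-evolution-Z} being nonnegative since $\ric>0$), and then run a strong maximum principle backwards from the interior zero. The genuine gap is in how you propose to obtain the sign $Z\ge 0$ on $M\times(-\infty,t_0]$: the scalar inequality $\Box Z \ge RZ$ by itself cannot produce this for an eternal solution, because there is no initial time at which the sign of $Z$ is known, and a maximum-principle argument on compact exhaustions needs either nonnegative data at some time or a uniform lower bound for $Z$ together with control as $t\to-\infty$. Neither is available for free: by \eqref{equation-vf} one has $\langle\nabla R,X\rangle+\frac{1}{2(n-1)}R_{ij}X_iX_j=-\frac{1}{2(n-1)}R_{ij}X_iX_j=-\frac{n-1}{2}\,\ric^{-1}(\nabla R,\nabla R)$, which need not be bounded below when $\ric$ is strictly but not uniformly positive, so even the crude ODE-comparison trick on $(-\infty,t_0]$ does not obviously close. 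The paper does not derive $Z\ge0$ this way at all: it quotes Chow's Harnack estimate \cite{Ch} for the locally conformally flat Yamabe flow with positive Ricci curvature, which for an eternal solution (the $R/t$ term having been dropped in \eqref{equation-har}) gives exactly $Z\ge 0$. You should either cite that result, as the paper does, or reproduce its proof; it does not follow from the inequality you wrote down.

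The second point you flag --- the strong maximum principle on a complete noncompact manifold --- is not something the paper merely quotes; it is the actual content of its proof, following Hamilton \cite{Ha1}. Assuming $Z>0$ somewhere at a time $t_1<t_0$ (normalized to $t_1=0$), one lets $F$ solve the linear heat equation $F_t=(n-1)\Delta F$ with $F(0)=Z(0)\ge0$, so that $F>0$ everywhere for $t>0$ by the strong maximum principle; then, with Hamilton's exhaustion function $f$ from \cite{Ha2} and the barrier $\phi=\delta e^{At}f$, the quantity $\hat Z=Z-F+\phi$ satisfies a strict parabolic inequality and attains its minimum in a bounded set, giving $Z\ge F-\phi$ for $t\ge0$, and letting $\delta\to0$ yields $Z\ge F>0$ for $t>0$, contradicting $Z=0$ at $P_0$. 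So both of the ``delicate parts'' you defer are precisely where the proof lives: to make your argument complete you must import Chow's estimate $Z\ge0$ and then carry out (or carefully cite, with hypotheses verified) this barrier-and-comparison version of the strong maximum principle in the noncompact setting.
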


\begin{proof}
The proof is similar to the proof of Lemma $4.1$ in \cite{Ha1}. For the convenience of a reader we will include the main steps of the proof. In what follows we denote
by $\Delta$ the Laplacian with respect to the metric $g_{ij}(x,t)$.
Our Lemma will be a consequence of the usual strong maximum principle,
which assures that if we have a function $h \ge 0$ which solves
$$h_t =\Delta h$$
for $t\ge 0$ and if we have $h > 0$ at some point when $t = 0$, then
 $h > 0$ everywhere  for  $t > 0$.

Assume there is a  $t_1 < t_0$ such  that $Z(g,X) \neq 0$ at some point, at
time $t_1$. We may assume, without the loss of generality, that  $t_1 = 0$. Define
$F_0 := Z(0)$ and allow $F_0$ to evolve by the  equation
$$F_t = (n-1)\,  \Delta F.$$
From  the result of Chow we know that $F(0) \ge 0$ and therefore it will
remain so for $t \ge 0$,  by the maximum principle. Since by our
assumption,  there is a point at $t = 0$ at which $F(0) > 0$, we conclude by the
strong maximum principle that  $F > 0$ everywhere as soon as $t > 0$.  

Take
$\phi = \delta e^{At}f(x)$, where $f(x)$ is the function constructed
in \cite{Ha2} with $f(x) \to \infty$ as $x\to \infty$, $f(x) \ge 1$
everywhere, with all the covariant derivatives bounded, and $A$ is
big enough (depending on $\delta$) so that
$$\phi_t > (n-1) \, \Delta \phi.$$
Observe next that since $R, Z \ge 0$, $A_{ij}X_iX_j \ge
0$ and $Ric \ge 0$,  all terms on the right hand side of (\ref{eq-evolution-Z})
are nonnegative, therefore 
$$Z_t \geq (n-1)\, \Delta Z.$$
Hence, $\hat{Z} := Z - F + \phi$ satisfies the differential inequality
$$\hat{Z}_t \ge (n-1) \Delta \hat{Z} - F_t + (n-1)\, \Delta F + \phi_t - (n-1)\, \Delta\phi$$
and from the choice of $\phi$ and $f$
$$\hat{Z}_t > (n-1)\, \Delta\hat{Z}.$$
Since $\phi(x) \to \infty$ as $x\to \infty$, $\hat{Z}$ attains the minimum inside
a bounded set and by the maximum principle, we have 
$$(\hat{Z}_{\min})_t > 0$$
which implies that 
$$\hat{Z}_{\min}(t) \ge \hat{Z}_{\min}(0)=\phi(0) > 0.$$
We conclude that  $Z \ge F - \phi$ everywhere,  for $t \ge 0$. We now let
$\delta \to 0$ in the choice of $\phi$. This yields
\begin{equation}
\label{equation-max-principle}
Z \ge F > 0, \qquad \mbox{as soon as} \,\, t > 0.
\end{equation}
On the other hand, $Z(g,X) = 0$ at time $t_0 > 0$, at the point where
$R$ attains its maximum, which contradicts (\ref{equation-max-principle}).
This implies $Z(g,X) \equiv 0$ everywhere, for $t < t_0$, and  finishes the
proof of Lemma \ref{lemma-zero-every}.
\end{proof}

\begin{proof}[Proof of Corollary \ref{cor-eternal}]
The result readily   follows from Lemmas 
\ref{lemma-evolution-Z} and  \ref{lemma-zero-every}. Since $Z
\equiv 0$ and since all terms on the right hand side of (\ref{eq-evolution-Z})
are nonnegative, we obtain from  (\ref{eq-evolution-Z}) the identity 
$$\nabla_iX_j = R\, g_{ij}$$
that is, $g$ is a steady soliton. Since $\nabla_iX_j = \nabla_jX_i$
and since our manifold is simply connected,  the vector field
$X$ is   a gradient of a function, which means that the metric $g$ is a
gradient steady soliton.
\end{proof}


\begin{thebibliography}{11}



\bibitem{AK} Angenent,S., Knopf,D., {\em   An example of neckpinching for Ricci flow on $S^{n+1}$}; Math. Res. Lett. 11 (2004), no. 4, 493-518.

\bibitem{S1} Brendle, S. {\em  Convergence of the Yamabe flow in dimension 6 and higher},  Invent. Math. 170 (2007), no. 3, 541Ð576.

\bibitem{S2} Brendle, S. {\em  Convergence of the Yamabe flow for arbitrary initial energy}, J. Differential Geom. 69 (2005), no. 2, 217Ð278.


\bibitem{CC} Cao,H.-D., Chen,Q., {\em On locally conformally flat gradient steady Ricci solitons}; arXiv:0909.2833.

\bibitem{CH} Carron,G., Herzlich,M., {\em Conformally flat manifolds with non negative Ricci curvature};  Compositio Math. 142 (2006), no. 3, 798Ñ810.

\bibitem{Ch} Chow, B., {\em The Yamabe flow on locally conformally
flat manifolds with positive Ricci curvature}, Comm. Pure Appl.
Math. 65 (1992), 1003--1014.

\bibitem{DSe} Daskalopoulos, P.,  Sesum, N., {\em On the extinction profile  of solutions to
fast-diffusion},  J. Reine Angew. Math. 622 (2008), 95Ð119.

\bibitem{GP}  Galaktionov, V. A.; Posashkov, S. A.,  {\em On the  nonlinear fast diffusion equation on $\R^N$},
Soviet Math. Dokl. 33 (1986), No 2, pp. 412-415.

\bibitem{DS} del Pino, M.; S\'aez, M., On the extinction profile for
solutions of $u\sb t=\Delta u\sp {(N-2)/(N+2)}$. Indiana Univ.
Math. J.  50 (2001), no. 1, 611--628.


%
\bibitem{Ha1} Hamilton, R., {\em Eternal solutions to the Ricci
flow}, J.Diff.Geom. 38 (1993), 1--11.

\bibitem{Ha2} Hamilton,  R., {\em The Harnack estimate for the Ricci
flow}, J.Diff.Geom. 37 (1993), 225--243.

\bibitem{HP} Herrero, M. A., ; Pierre, M., 
{\em The Cauchy problem for $u_t=\Delta u^m$ when $0<m<1$},  
Trans. Amer. Math. Soc. 291 (1985), No. 1, 145--158.



\bibitem{SS} Schwetlick, H.; Struwe, M. {\em  Convergence of the Yamabe flow for "large'' energies}, J. Reine Angew. Math. 562 (2003), 59Ð100.


\bibitem{Vaz1} V\'azquez, J. L., {Smoothing and decay estimates for nonlinear diffusion equations}, 
Oxford Lecture Series in Mathematics and its Applications, 33,  Oxford University Press, Oxford, 2006. 

\bibitem{Y} Ye, R. {\em Global existence and convergence of Yamabe flow}, J. Differential Geom. 39 (1994), no. 1, 35Ð50.




%

%
%

\end{thebibliography}
\end{document}